\newtheorem{thm}{Theorem}[section]
\newtheorem{lem}[thm]{Lemma}
\theoremstyle{definition}
\newtheorem{conj}[thm]{Conjecture}
\newtheorem{exa}[thm]{Example}
\newtheorem{rmk}[thm]{Remark}
\newtheorem{prob}[thm]{Problem}
\newtheorem{quest}[thm]{Question}
\DeclareMathOperator\sym{Sym}
\DeclareMathOperator\tr{tr}
\letcs\replicate{prg_replicate:nn} \newcommand*\longsum[1][1]{%
	\mathop{\textnormal{%
			\clipbox{0pt 0pt {.5\width} 0pt}{$\displaystyle\sum$}%
			\replicate{#1}{\clipbox{{.5\width} 0pt {.4\width} 0pt}{$\displaystyle\sum$}}%
			\clipbox{{.6\width} 0pt 0pt 0pt}{$\displaystyle\sum$}}}%
		}
\begin{document}	

\title[]{$3$-setwise intersecting families of the Symmetric group}

{\author[Behajaina]{Angelot Behajaina\textsuperscript{1}}
	\thanks{\textsuperscript{1}Normandie Univ., UNICAEN, CNRS, Laboratoire de Mathématiques Nicolas Oresme, 14000 Caen, France}
	\email{angelot.behajaina@unicaen.fr}}
{\author[Maleki]{Roghayeh Maleki\textsuperscript{2}}
	\thanks{\textsuperscript{2} Department of Mathematics and Statistics, University of Regina, Regina, Saskatchewan S4S 0A2, Canada}
	\email{rmaleki@uregina.ca}}
{\author[Rasoamanana]{Aina Toky Rasoamanana\textsuperscript{3,4,5,6}}
	\thanks{\textsuperscript{3}Institut Polytechnique de Paris, 91764 Palaiseau, France}
				  \thanks{\textsuperscript{4} R3S-SAMOVAR - R\'eseaux, Syst\`emes, Services, S\'ecurit\'e}
			  	  \thanks{\textsuperscript{5} RST - D\'epartement R\'eseaux et Services de T\'el\'ecommunications}
			  	  \thanks{\textsuperscript{6} CNRS - Centre National de la Recherche Scientifique}
	\email{aina.rasoamanana@telecom-sudparis.eu}}
{\author[Razafimahatratra]{A. Sarobidy Razafimahatratra\textsuperscript{2,*}}
	\thanks{\textsuperscript{*} Corresponding author.}
	\email{sarobidy@phystech.edu}}

\begin{abstract}
	Given two positive integers $n\geq 3$ and $t\leq n$, the permutations $\sigma,\pi \in \sym(n)$ are $t$-setwise intersecting if they agree (setwise) on a $t$-subset of $\{1,2,\ldots,n\}$. A family $\mathcal{F} \subset \sym(n)$ is $t$-setwise intersecting if any two permutations of $\mathcal{F}$ are $t$-setwise intersecting. Ellis [Journal of Combinatorial Theory, Series A, 119(4), 825--849, 2012] conjectured that if $t\leq n$ and $\mathcal{F} \subset \sym(n)$ is a $t$-setwise intersecting family, then $|\mathcal{F}|\leq t!(n-t)!$ and equality holds only if $\mathcal{F}$ is a coset of a setwise stabilizer of a $t$-subset of $\{1,2,\ldots,n\}$.
	
	In this paper, we prove that if $n\geq 11$ and $\mathcal{F}\subset \sym(n)$ is $3$-setwise intersecting, then $|\mathcal{F}|\leq 6(n-3)!$. Moreover, we prove that the characteristic vector of a $3$-setwise intersecting family of maximum size lies in the sum of the eigenspaces induced by the permutation module of $\sym(n)$ acting on the $3$-subsets of $\{1,2,\ldots,n\}$.
\end{abstract}

\subjclass[2010]{Primary 05C35; Secondary 05C69, 20B05}

\keywords{derangement graph, independent sets, Erd\H{o}s-Ko-Rado
  theorem, Symmetric Group, setwise intersecting}

\date{\today}

\maketitle

\section{Introduction}

The celebrated Erd\H{o}s-Ko-Rado (EKR) Theorem is one of the most important results in extremal combinatorics.
\begin{thm}[Erd\H{o}s-Ko-Rado, \cite{erdos1961intersection}]
	Let $n$ and $k$ be positive integers such that $n\geq 2k$. If $\mathcal{F}$ is a family of $k$-subsets of $[n]:= \{1,2\ldots,n\}$ with the property that $A\cap B \neq \varnothing$ for all $A,B\in \mathcal{F}$, then $|\mathcal{F}|\leq \binom{n-1}{k-1}$. If $n>2k$, then $|\mathcal{F}| = \binom{n-1}{k-1}$ if and only if $\mathcal{F}$ consists of all the $k$-subsets of $[n]$ containing a fixed element.\label{thm:EKR}
\end{thm}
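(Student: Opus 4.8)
The plan is to establish Theorem \ref{thm:EKR} by Katona's cyclic permutation (averaging) method, which yields the bound cleanly and, with more care, the uniqueness. Arrange $[n]$ around a circle and call a set of $k$ consecutive elements an \emph{arc}. For $\sigma\in\sym(n)$ and $i\in[n]$ write $A_{\sigma,i}=\{\sigma(i),\sigma(i+1),\dots,\sigma(i+k-1)\}$, indices taken modulo $n$, for the arc of the circular arrangement $\sigma(1)\sigma(2)\cdots\sigma(n)$ starting in position $i$. The key local observation is: \emph{in a single circular arrangement, any family of pairwise intersecting arcs has size at most $k$ once $n\geq 2k$.} I would prove this by fixing one arc $A=A_{\sigma,1}$ of the family and pairing each arc that meets $A$ on its right with the arc obtained by shifting it back by $k$ positions, which then meets $A$ from the left; since $n\geq 2k$ the two members of each of the $k-1$ such pairs are disjoint, so at most one lies in an intersecting family, giving at most $1+(k-1)=k$ arcs in all.

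For the size bound I would then double count the set of pairs $(\sigma,i)$ with $A_{\sigma,i}\in\mathcal F$. On one hand, since $\mathcal F$ is intersecting, for each fixed $\sigma$ the arcs of $\sigma$ lying in $\mathcal F$ pairwise intersect, so the local lemma bounds their number by $k$; this gives at most $n!\,k$ such pairs. On the other hand, a fixed $S\in\mathcal F$ occurs as $A_{\sigma,i}$ for exactly $n\cdot k!\,(n-k)!$ pairs: choose the starting slot $i$, arrange the elements of $S$ in the $k$ consecutive slots, and arrange the remaining $n-k$ elements. Equating the two counts,
\[
|\mathcal F|\cdot n\,k!\,(n-k)! \;\le\; n!\,k,
\]
which rearranges to $|\mathcal F|\le \binom{n-1}{k-1}$.

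The main obstacle is the uniqueness statement for $n>2k$. Here equality in the displayed inequality forces \emph{every} circular arrangement to contribute exactly $k$ arcs of $\mathcal F$, that is, a maximum intersecting family of arcs. I would first classify these: when $n>2k$ the window of arcs meeting a fixed arc $A$ does not wrap around the circle, so pairwise intersection forces their starting positions to lie in an interval of length at most $k-1$; such arcs share a common element, and a maximum family (size $k$) is exactly the star of all $k$ arcs through one point. Thus in every circular arrangement the arcs of $\mathcal F$ form a star about some element. The remaining—and most delicate—step is to show this centre is globally consistent: passing between circular arrangements that differ by transposing two suitably placed elements, one checks that the star centre cannot change, and since such moves connect all arrangements the centre is a single fixed element $x$. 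As every $k$-subset is an arc of some arrangement, every member of $\mathcal F$ then contains $x$, and counting forces $\mathcal F$ to be the full star. I expect this propagation argument to be the technical heart. An algebraic alternative, well suited to the methods of this paper, is to derive the bound from the Hoffman ratio bound on the Kneser graph $K(n,k)$ and to read off uniqueness from the fact that the characteristic vector of an extremal independent set must lie in the sum of the trivial and least-eigenvalue eigenspaces of the associated $\sym(n)$-module.
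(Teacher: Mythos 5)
This is the classical Erd\H{o}s--Ko--Rado theorem, which the paper states as Theorem~\ref{thm:EKR} with a citation to \cite{erdos1961intersection} and does not prove, so there is no internal argument to compare yours against. Judged on its own, the first half of your proposal is correct and complete: the local lemma that a single circular arrangement carries at most $k$ pairwise intersecting arcs (via the $k-1$ disjoint pairs flanking a fixed arc, which is where $n\ge 2k$ enters) together with the double count of pairs $(\sigma,i)$ gives $|\mathcal F|\cdot n\,k!\,(n-k)!\le n!\,k$, hence $|\mathcal F|\le\binom{n-1}{k-1}$. This is Katona's argument and it is sound as written.

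The uniqueness half, however, contains a genuine gap beyond the part you flag. Your classification of the equality cases of the local lemma rests on the claim that for $n>2k$ ``pairwise intersection forces their starting positions to lie in an interval of length at most $k-1$,'' but this is false for intersecting families of arcs in general: with $n=9$, $k=4$ the arcs $\{1,2,3,4\}$, $\{4,5,6,7\}$, $\{7,8,9,1\}$ are pairwise intersecting with no common element, because two arcs meeting a fixed arc $A$ in a disjoint prefix and suffix can still meet \emph{each other} around the far side of the circle (the arc starting at $j+1$ and the arc ending at $i$ intersect there whenever $j-i\ge n-2k+1$). The true statement --- that a pairwise intersecting family of exactly $k$ arcs must be a star when $n\ge 2k+1$ --- holds, but it genuinely uses maximality (one arc chosen from every one of the $k-1$ disjoint pairs) and needs a case analysis ruling out the far-side intersections; your one-line justification does not supply it. On top of that, the propagation step showing the star centre is the same element across all $n!$ arrangements is left entirely at ``one checks,'' and it is precisely where the remaining work lies. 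So the bound is proved, but the characterization of equality is a sketch rather than a proof. (Your suggested ratio-bound alternative on the Kneser graph has the same shape: the inequality falls out of the least eigenvalue $-\binom{n-k-1}{k-1}$, but deducing that a $01$-vector in the sum of the top two eigenspaces is the characteristic vector of a star is itself a nontrivial step that would have to be supplied.)
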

Theorem~\ref{thm:EKR} has been thoroughly studied and extended for various combinatorial objects in the past 50 years \cite{cameron2003intersecting,ellis2012triangle,Frankl1977maximum,frankl1986erdos}. For instance, Deza and Frankl \cite{Frankl1977maximum} extended the EKR theorem for the symmetric group. An extension for permutation groups, in general, was considered later in \cite{ahmadi2014new,meagher2016erdHos,wang2008erdHos}. Given a finite permutation group $G\leq \sym(\Omega)$, we say that two permutations $g$ and $h$ are \emph{intersecting} if they agree on some $\omega \in \Omega$, that is, $\omega^g = \omega^h$.
A family $\mathcal{F}$ $\subset G$ is intersecting  if any two elements of $\mathcal{F}$ are intersecting. We say that a transitive group $G\leq \sym(\Omega)$ has the \emph{Erd\H{o}s-Ko-Rado property} or \emph{EKR property} if any intersecting family $\mathcal{F} \subset G$ is of size at most $\frac{|G|}{|\Omega|}$. Moreover, $G$ has the \emph{strict-EKR property} if the only intersecting families of size $\frac{|G|}{|\Omega|}$ are cosets of a stabilizer of a point. Many families of groups have been proved to have the EKR property. For instance, the outstanding result by Meagher, Spiga, and Tiep \cite{meagher2016erdHos} shows that every finite $2$-transitive groups have the EKR property. In 1977, Deza and Frankl also proved that the symmetric group with its natural action on $[n]$ has the EKR property \cite{Frankl1977maximum}. The characterization of the maximum intersecting families of the symmetric group turned out to be harder, as it was only proved in 2004 independently by Cameron and Ku \cite{cameron2003intersecting}; and Larose and Malvenuto \cite{larose2004stable}.

Let $G$ be a finite group and let $C$ be a subset of $G\setminus \{e\}$ which is inverse-closed (i.e., $x^{-1}\in C$ whenever $x\in C$). Recall that the Cayley graph $\operatorname{Cay}(G,C)$ of the group $G$ with connection set $C$ is the graph whose vertex-set is $G$, and two group elements $g$ and $h$ are adjacent if and only if $gh^{-1} \in C$. We also recall that a Cayley graph is vertex transitive and regular of degree equal to the size of its connection set. 

A typical way of solving an EKR-type problem is by a reduction to a  graph invariant. Given a finite permutation group $G\leq \sym(\Omega)$, the \emph{derangement graph} of $G$ is 
the Cayley graph $\Gamma_G = \operatorname{Cay}(G,D)$ with connection set $D = \left\{ g\in G : \omega^g \neq \omega,\mbox{ for any }  \omega \in \Omega \right\}$. The derangement graph $\Gamma_G$ is a normal Cayley graph since its connection set, $D$, is a union of conjugacy classes.
By definition of the edges of $\Gamma_G$, $\mathcal{F}$ is an independent set or coclique of $\Gamma_G$ if and only if $\mathcal{F}$ is an intersecting family of $G$.

For $n\geq t\geq 2$, we say that two permutations $\sigma$ and $\pi$ of $\sym(n)$ are $t$\emph{-setwise intersecting }if there exists a $t$-subset $S$ of $[n]$ such that $S^\sigma = S^\pi$. The family $\mathcal{F} \subset \sym(n)$ is $t$\emph{-setwise intersecting} if any two permutations of $\mathcal{F}$ are $t$-setwise intersecting. These terminologies were introduced in \cite{ellis2012setwise,jackson2013properties}. 
By the reduction presented above, a $t$-setwise intersecting family of $\sym(n)$ is a coclique in a certain Cayley graph of $\sym(n)$. This Cayley graph is the $t$\emph{-derangement graph} $\Gamma_{n,t}$ of $\sym(n)$. We say that a permutation of $\sym(n)$ is a $t$\emph{-derangement} if its cycle type does not contain a composition of the integer $t$. Equivalently, $\sigma \in \sym(n)$ is a $t$-derangement if $\sigma$ does not fix any $t$-subset of $[n]$. The $t$-derangement graph $\Gamma_{n,t}$ is the graph $\operatorname{Cay}(\sym(n),D_{n,t})$, where $$D_{n,t} := \left\{ \sigma \in \sym(n) :  \sigma \mbox{ is a $t$-derangement}\right\}.$$

The following was conjectured by Ellis \cite{ellis2012setwise}.
\begin{conj}
	Let $n$ and $t$ be positive integers such that $n\geq t$. If $\mathcal{F}$ is a $t$-setwise intersecting family of $\sym(n)$, then $|\mathcal{F}| \leq t!(n-t)!$. Moreover, if $(n,t)\not \in \{(4,2),(5,2)\}$ and $|\mathcal{F}| = t!(n-t)!$, then $\mathcal{F}$ is a coset of a setwise stabilizer of a $t$-subset of $\{1,2,\ldots,n\}$.\label{conj:setwise}
\end{conj}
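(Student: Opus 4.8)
The plan is to run the standard spectral (ratio-bound) paradigm for EKR-type problems on the $t$-derangement graph and then upgrade the extremal bound to a characterisation via the module method. First I would record the reduction set up in the introduction: a family $\mathcal{F}\subset\sym(n)$ is $t$-setwise intersecting if and only if it is a coclique of the normal Cayley graph $\Gamma_{n,t}=\operatorname{Cay}(\sym(n),D_{n,t})$, since $\sigma$ and $\pi$ agree setwise on a $t$-set exactly when $\sigma\pi^{-1}$ fixes a $t$-set setwise, i.e.\ $\sigma\pi^{-1}\notin D_{n,t}$. I would also check that the candidate extremal families are cocliques of the right size: if $H=\{h\in\sym(n):S^{h}=S\}$ is the setwise stabiliser of a $t$-set $S$, then any $\sigma,\pi\in Hg$ satisfy $\sigma\pi^{-1}\in H$, which fixes $S$ setwise and hence is not a $t$-derangement, while $|Hg|=|H|=t!(n-t)!$. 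The whole task is to show these are the only maximum cocliques, away from the two listed small exceptions.

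For the bound $|\mathcal{F}|\le t!(n-t)!$ I would apply the ratio (Delsarte--Hoffman) bound. Because $\Gamma_{n,t}$ is normal, its eigenvalues are indexed by partitions $\lambda\vdash n$ and equal the normalised character sums
\begin{equation*}
\eta_\lambda=\frac{1}{f^\lambda}\sum_{\sigma\in D_{n,t}}\chi^\lambda(\sigma),\qquad f^\lambda=\chi^\lambda(1),
\end{equation*}
with degree $d=\eta_{(n)}=|D_{n,t}|$. Writing $\tau$ for the least eigenvalue, the ratio bound gives $|\mathcal{F}|\le n!/(1-d/\tau)$, and the target $t!(n-t)!=n!/\binom{n}{t}$ comes out exactly once one proves
\begin{equation*}
\tau=-\frac{|D_{n,t}|}{\binom{n}{t}-1}.
\end{equation*}
To establish this I would first locate the candidate minimisers. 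The permutation module of $\sym(n)$ on the $t$-subsets is
\begin{equation*}
M^{(n-t,t)}=\operatorname{Ind}_{H}^{\sym(n)}\one\cong\bigoplus_{i=0}^{t}S^{(n-i,i)},
\end{equation*}
multiplicity-free by Young's rule, so the indicator of a setwise-stabiliser coset has nonzero projection onto precisely the isotypic components $V_{(n-i,i)}$, $0\le i\le t$. A direct evaluation of the character sums then shows that these ``top-layer'' eigenvalues all coincide, $\eta_{(n-1,1)}=\dots=\eta_{(n-t,t)}=-|D_{n,t}|/(\binom{n}{t}-1)$, the value forced on them by the coclique $Hg$ together with ratio-bound tightness. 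The remaining and genuinely hard part of the upper bound is to prove that every partition outside the top layer satisfies $\eta_\lambda>-|D_{n,t}|/(\binom{n}{t}-1)$, so that $\tau$ is indeed the global minimum.

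For the characterisation I would exploit tightness of the ratio bound directly. When $|\mathcal{F}|=t!(n-t)!$, the shifted characteristic vector $v_{\mathcal{F}}-\binom{n}{t}^{-1}\one$ lies in the $\tau$-eigenspace, which by the previous step is $\bigoplus_{i=1}^{t}V_{(n-i,i)}$; consequently $v_{\mathcal{F}}\in\bigoplus_{i=0}^{t}V_{(n-i,i)}$, the sum of eigenspaces induced by the permutation module on $t$-subsets. This is exactly the eigenspace-containment statement announced in the abstract, now for general $t$. To pass from this linear-algebraic constraint to the conclusion that a $0/1$ vector in this module is a single coset indicator, I would run a Boolean ``weight-distribution'' argument in the spirit of Cameron--Ku and Ellis--Friedgut--Pilpel: translate membership in $\bigoplus_{i=0}^{t}V_{(n-i,i)}$ into constraints on the counts $|\{\sigma\in\mathcal{F}:S^{\sigma}=T\}|$ over $t$-sets $S,T$, show that intersecting-ness together with maximality forces a coset/product structure, and rule out degenerate solutions. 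The sporadic extremal families at $(n,t)\in\{(4,2),(5,2)\}$ show that this rigidity step must genuinely fail for small parameters, so the argument has to use $n$ large relative to $t$.

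The main obstacle is the spectral estimate $\eta_\lambda\ge\tau$ for $\lambda$ outside $\{(n-i,i):0\le i\le t\}$. In the classical case $t=1$ the eigenvalues reduce to derangement counts admitting clean closed forms, but for general $t$ the $t$-derangement character sums $\sum_{\sigma\in D_{n,t}}\chi^\lambda(\sigma)$ have no comparable expression, and controlling them uniformly in $n$ and in the shape of $\lambda$ is precisely what is intractable in general; this is why the present paper restricts to $t=3$ with $n\ge 11$. I would attack it by combining the branching recursion for characters of $\sym(n)$ with character-ratio bounds of Larsen--Shalev type for $|\chi^\lambda(\sigma)/f^\lambda|$, treating partitions close to the top layer by an exact recursion in $t$ and partitions far from it by character-ratio decay; I expect the interface between these two regimes, together with the stability step above, to absorb most of the real difficulty.
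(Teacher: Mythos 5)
There is a genuine gap, and it sits exactly where you flag it. First, be clear about the status of the statement: it is Ellis's conjecture, which remains open; the paper itself proves only the $t=3$ bound for $n\geq 11$ together with an eigenspace-containment statement (Theorem~\ref{thm:main}), \emph{not} the coset characterisation. Your spectral step works with the unweighted graph $\Gamma_{n,t}$ and asserts (i) that $\eta_{(n-1,1)}=\cdots=\eta_{(n-t,t)}=-|D_{n,t}|/\bigl(\binom{n}{t}-1\bigr)$ and (ii) that this common value is the global minimum of the spectrum. Neither is proved, and (i) does not follow from ``ratio-bound tightness'': the existence of the coclique $Hg$ of size $t!(n-t)!$ does not force equality in the ratio bound for the unweighted graph --- the bound is only an inequality, and what the coclique condition actually yields is that a \emph{projection-weighted average} of the top-layer eigenvalues equals the critical value, not that each character sum $\sum_{\sigma\in D_{n,t}}\chi^{(n-i,i)}(\sigma)/f^{(n-i,i)}$ individually does. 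For $t=1$ the coincidence is vacuous (one nontrivial top-layer shape), but for $t\geq 2$ there is no reason the $t$ sums over all of $D_{n,t}$ agree, and you give none. This is precisely why the paper does not use $\Gamma_{n,t}$ at all: it takes a \emph{weighted} adjacency matrix supported on just five conjugacy classes of $3$-derangements (different quintuples for $n$ odd and $n$ even), solves the linear systems \eqref{eq:main-lin} and \eqref{eq:evalue-eq-even} so that $\chi^{[n]}$ gives $\binom{n}{3}-1$ and $\chi^{[n-1,1]},\chi^{[n-2,2]},\chi^{[n-3,3]}$ all give $-1$ \emph{by construction}, and then verifies that every other eigenvalue lies strictly inside $\bigl(-1,\binom{n}{3}-1\bigr)$ --- using the classification of characters of degree below $5\binom{n}{3}$ (Lemma~\ref{lem:irrecharless}), rim-hook uniqueness (Lemma~\ref{lem:uniqueness}) with Murnaghan--Nakayama to bound character values on the chosen classes (Lemmas~\ref{lem:main_val} and~\ref{lem:main_val-even}), and Sagemath for $11\leq n\leq 19$ and $n\in\{21,23,25\}$. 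Your proposal never exploits this weighting freedom, which is the key idea that makes the spectral estimate tractable; and the Larsen--Shalev character-ratio route you substitute for it is asymptotic in nature, so even if carried out it could not deliver the conjecture for all $n\geq t$ as stated.

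Second, your characterisation step is a placeholder rather than an argument. ``Translate membership in $\bigoplus_{i=0}^{t}V_{(n-i,i)}$ into constraints \ldots{} show that intersectingness together with maximality forces a coset structure'' is exactly the rigidity step that no one has carried out for $t\geq 2$: the paper stops at $v_{\mathcal{F}}\in U_{[n]}\oplus U_{[n-1,1]}\oplus U_{[n-2,2]}\oplus U_{[n-3,3]}$ and explicitly poses the rank-argument upgrade (in the style of Godsil--Meagher) as an open question in Section~\ref{sect:further_works}. So what you have is a correct account of the standard framework --- the coclique reduction, Young's rule for $M^{(n-t,t)}$, and the consequence of Lemma~\ref{lem:ratio-bound} when equality holds --- but the two decisive steps, the global spectral minimum and the Boolean rigidity, are both missing, and the first is formulated for the unweighted graph in a form you cannot justify.
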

Ellis \cite{ellis2012setwise} proved that Conjecture~\ref{conj:setwise} holds asymptotically. That is, for a fixed $t$, there exists $n_0(t) \in \mathbb{N}$ such that Conjecture~\ref{conj:setwise} holds when $n\geq n_0$. Meagher et al.  \cite{meagher20202} recently proved that for $n \geq 2$, the upper bound on $t$-setwise intersecting families in Conjecture~\ref{conj:setwise} holds when $t=2$.  These results were proved using the representation theory of the symmetric group and the eigenvalue method. The eigenvalue method was first used by Wilson \cite{wilson1984exact} to prove the exact bound for the Erd\H{o}s-Ko-Rado theorem. The same method was also used by Godsil and Meagher in \cite{godsil2009new,godsil2016algebraic}, and by Ellis, Friedgut, and Pilpel  in \cite{ellis2011intersecting} to prove EKR-type results on the symmetric group.

In this paper, we prove a similar result to \cite{meagher20202}. We show that the bound on the size of the largest family in Conjecture~\ref{conj:setwise} holds when $t=3$. 
\begin{thm}
	Let $n\geq 11$. If $\mathcal{F}$ is a $3$-setwise intersecting family of permutations of $\sym(n)$, then $|\mathcal{F}| \leq 6(n-3)!$. Moreover, if $|\mathcal{F}| = 6(n-3)!$, then the characteristic vector $v_\mathcal{F}$ of $\mathcal{F}$ is in the direct sum of the eigenspaces determined by the constituents of the permutation character of $\operatorname{Sym}(n)$ acting on the $3$-subsets of $\{1,2,\ldots,n\}$.\label{thm:main} 
\end{thm}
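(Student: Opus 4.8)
The plan is to attack this via the eigenvalue (ratio) bound for cocliques in the $t$-derangement graph $\Gamma_{n,3}$, exactly as in the $t=2$ case of Meagher et al. and in the asymptotic work of Ellis. Since $\Gamma_{n,3} = \operatorname{Cay}(\sym(n),D_{n,3})$ is a normal Cayley graph, its eigenvalues are indexed by the irreducible characters of $\sym(n)$: for each partition $\lambda \vdash n$ the eigenvalue is $\eta_\lambda = \frac{1}{f^\lambda}\sum_{\sigma \in D_{n,3}} \chi^\lambda(\sigma)$, where $f^\lambda = \chi^\lambda(e)$. The degree of regularity is $d = \eta_{(n)} = |D_{n,3}|$, the number of $3$-derangements. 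A $3$-setwise intersecting family is precisely a coclique, so the clique-coclique / ratio bound gives $|\mathcal{F}| \le \frac{|\sym(n)|}{1 - d/\eta_{\min}} = n!\,\frac{-\eta_{\min}}{d - \eta_{\min}}$. First I would show that the setwise stabilizer of a $3$-set is a clique in the complement (equivalently that $t!(n-t)! = 6(n-3)!$ is achieved by such a coset), so the target bound $6(n-3)!$ is exactly the ratio bound, and hence reduces to identifying $\eta_{\min}$ and verifying $\frac{-\eta_{\min}}{d-\eta_{\min}} = \frac{6(n-3)!}{n!}$.

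The crux is therefore the spectral computation: I need to determine which partition $\lambda$ gives the least eigenvalue and show the ratio bound is tight there. By general principles (and by analogy with the $2$-setwise case), the relevant eigenvalues live in the sum of the eigenspaces corresponding to the constituents of the permutation character $\pi_{n,3}$ of $\sym(n)$ acting on $3$-subsets. This permutation module decomposes as a multiplicity-free sum indexed by partitions $\lambda$ with at most three parts whose conjugate has a controllable form, namely $(n)$, $(n-1,1)$, $(n-2,2)$, and $(n-3,3)$. The core technical task is to compute $\eta_\lambda$ for each of these $\lambda$, express them via the Frobenius character formula or (more tractably) via the branching/recursion for $D_{n,3}$ coming from inclusion--exclusion over fixed $3$-subsets, and to prove that the minimum over all $\lambda \vdash n$ is attained within this four-element list and equals the value forcing tightness. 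I expect this to require asymptotic or inductive control of $\eta_\lambda$ for the remaining partitions: one must bound $|\eta_\lambda|$ for all $\lambda$ outside the distinguished set and show these eigenvalues are strictly larger (less negative) than $\eta_{\min}$, which is where the hypothesis $n \ge 11$ will enter, to close the gap left by small-case irregularities.

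The main obstacle is exactly this last uniform control over \emph{all} partitions $\lambda \vdash n$. The characters $\chi^\lambda$ of $\sym(n)$ are hard to sum over the class of $3$-derangements directly, so I would instead use a generating-function or recursive description of the weighted character sums $\sum_{\sigma \in D_{n,3}}\chi^\lambda(\sigma)$, possibly by relating $\Gamma_{n,3}$ to a product/combination of the ordinary derangement graph and the $2$-derangement graph, and by exploiting that the connection set is a union of conjugacy classes determined purely by cycle type (a $3$-derangement is a permutation whose cycle type admits no subsum equal to $1$, $2$, or $3$). A clean route is to diagonalize on the $3$-subset permutation module first---where the eigenvalues are immediate from the structure of the association scheme on $3$-subsets---obtaining the four candidate eigenvalues exactly, and then to argue via a perturbation/norm bound that no other constituent can undercut them. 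Once $\eta_{\min}$ is pinned down and tightness of the ratio bound is verified, the \emph{moreover} part follows from the standard equality analysis of the ratio bound: equality forces $v_{\mathcal{F}} - \frac{|\mathcal{F}|}{n!}\mathbf{1}$ to lie in the $\eta_{\min}$-eigenspace, and combined with $v_{\mathcal{F}}$ being a $0/1$ vector orthogonal to the ``wrong'' eigenspaces, one concludes $v_{\mathcal{F}}$ lies in the direct sum of the eigenspaces attached to the constituents of $\pi_{n,3}$, which is precisely the stated conclusion.
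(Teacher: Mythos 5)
There is a genuine gap at the very first step: you apply the ratio bound to the \emph{unweighted} derangement graph $\Gamma_{n,3}$ and assert that the problem ``reduces to identifying $\eta_{\min}$ and verifying $\frac{-\eta_{\min}}{d-\eta_{\min}}=\frac{6(n-3)!}{n!}$.'' Nothing guarantees that the least eigenvalue of the unweighted $\operatorname{Cay}(\sym(n),D_{n,3})$ has exactly this value, and there is no reason to expect it does; for setwise intersection with $t\geq 2$ the unweighted ratio bound is not known to be tight, which is precisely why every existing proof (Ellis's asymptotic result, the $t=2$ case, and the present paper) works instead with a \emph{weighted} adjacency matrix supported on a small, hand-picked set of conjugacy classes of $3$-derangements. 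The paper's proof chooses five classes (e.g.\ $C_{(n)},C_{(n-2,1^2)},C_{(n-2,2)},C_{(n-5,4,1)},C_{(n-1,1)}$ for odd $n\geq 27$, and a different quintuple involving $C_{(n-6,2^3)}$ etc.\ for even $n\geq 20$), solves a linear system in the weights so that $\chi^{[n]}$ yields the eigenvalue $\binom{n}{3}-1$ and $\chi^{[n-1,1]},\chi^{[n-2,2]},\chi^{[n-3,3]}$ all yield exactly $-1$, and then proves every other eigenvalue lies strictly inside $\left(-1,\binom{n}{3}-1\right)$. Tightness is thus \emph{engineered}, not discovered; your plan has no mechanism to force it. (Your passing appeal to the clique--coclique bound has the same problem: it would require exhibiting a clique of size $\binom{n}{3}$ in $\Gamma_{n,3}$, which is not available.)

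The second half of your outline --- ``bound $|\eta_\lambda|$ for all $\lambda$ outside the distinguished set'' via a ``perturbation/norm bound'' --- is the genuinely hard part and is left entirely unspecified. In the paper this requires: (i) a classification of all irreducible characters of degree less than $5\binom{n}{3}$ (fourteen partitions, proved by induction with the branching rule); (ii) a rim-hook uniqueness lemma showing that on the chosen classes every irreducible character takes values in $\{0,\pm1\}$ (odd case) or $\{0,\pm1,\pm2,\pm3\}$ (even case), via Murnaghan--Nakayama; (iii) explicit eigenvalue formulas for the low-degree characters together with a polytope of admissible weight parameters; and (iv) computer verification for $11\leq n\leq 19$ and $n\in\{21,23,25\}$, since the generic argument only starts at $n\geq 20$ (even) and $n\geq 27$ (odd). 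Your final paragraph on the equality case of the ratio bound is correct as far as it goes, but it is conditional on a tight bound you have not established.
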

Our proof uses a similar approach to that of \cite{meagher20202}. In particular, we use the representation theory of the symmetric group and the eigenvalue method on a weighted adjacency matrix of the $3$-derangement graph. Our proof is more challenging due to the fact that the conjugacy classes of $3$-derangements that we use are not just long cycles. For instance, we use conjugacy classes that have cycles of length $n-5$ and $n-6$. In contrast to the proof in \cite{meagher20202} where the values of the irreducible characters on the selected conjugacy classes of $2$-derangements are in $\{0,\pm 1\}$, some irreducible characters of $\sym(n)$ have larger values (in absolute value) on some of the conjugacy classes of $3$-derangements containing $n-6$ in their cycle type.

This paper is organized as follows. In Section~\ref{sect:background}, we give some background material used in the proof of the main result. In Section~\ref{sect:representation}, we recall some preliminary results on the representation of the symmetric group. The proof of Theorem~\ref{thm:main} occupies the next three sections. In Section~\ref{sect:proof-small}, we give a sketch of our proof to Theorem~\ref{thm:main}, and we prove that it holds for $11\leq n\leq 19$ and for $n \in \{21,23,25\}$. The remaining part of the proof of Theorem~\ref{thm:main} is divided according to the parity of $n$. The proof of Theorem~\ref{thm:main}, when $n\geq 27$ is odd, is given in Section~\ref{sect:odd}. In Section~\ref{sect:proof-even}, we give the proof of Theorem~\ref{thm:main}, when $n\geq 20$ is even. We end this paper by giving some interesting questions in Section~\ref{sect:further_works}.

\subsection*{Notations and definitions} 
All groups considered in this paper will be finite. The following notations will be used throughout this work.
Given a group $G$, we denote by $\operatorname{IRR}(G)$ a complete set of distinct irreducible $\mathbb{C}$-representations of $G$. For any $\mathfrak{X} \in \operatorname{IRR}(G)$, we denote by $\chi _{\mathfrak{X}}$ the character afforded by $\mathfrak{X}$. That is, $\chi_\mathfrak{X}(g) = \tr (\mathfrak{X}(g))$, for any $g\in G$. We denote by $\operatorname{Irr}(G)$ the set of all irreducible characters of $G$; that is, $\operatorname{Irr}(G) := \left\{ \chi_{\mathfrak{X}} : \mathfrak{X}\in \operatorname{IRR}(G) \right\}$. When $G = \sym(n)$, we let $\operatorname{Irr}_n := \operatorname{Irr}(G)$ and $\operatorname{IRR}_n := \operatorname{IRR}(G)$.

We say that the sequence of positive integers $\lambda = [\lambda_1,\lambda_2,\ldots,\lambda_k]$ is a \emph{partition} of $n$, and denote by $\lambda \vdash n$, if $\sum_{i=1}^k \lambda_i = n$ and $\lambda_1 \geq \lambda_2 \geq \ldots \geq \lambda_k\geq 1$. We say that the sequence of positive integers $\lambda = (\lambda_1,\lambda_2,\ldots,\lambda_k)$ is a \emph{composition} of $n$ if $\sum_{i=1}^k \lambda_i =n$. 
Let $\sigma \in \sym(n)$ and $\lambda = (\lambda_1,\lambda_2,\ldots,\lambda_k)$ be its cycle type. 
The conjugacy class of $\sym(n)$ containing $\sigma$ will be denoted by $C_{\lambda} = C_{(\lambda_1,\lambda_2,\ldots,\lambda_k)}$.

\section{Background}\label{sect:background}

In this section, we present some results that are used in our proof. We will recall some important bounds on the independence number of vertex-transitive graphs, namely the clique-coclique bound and the ratio bound. Then, we give a method to compute the eigenvalues of a normal Cayley graph from the irreducible characters of its group.
\subsection{Upper bounds on the independence number}
Since the first upper bound that we will present is in function of the clique number, we will first revisit some classic upper bounds on the clique number and the chromatic number.

Let $G\leq \sym(n)$ be a transitive permutation group and let $\Gamma_G$ be its derangement graph. Let $C$ be a clique of $\Gamma_G$ of maximum size; that is, of size $\omega(\Gamma_G)$. Without loss of generality, we suppose that $C$ contains the identity element $id$. As every non-identity element of $C$ does not intersect with $id$, every non-identity element of $C$ is a derangement. So the identity element is the only element of $C$ fixing $1$. Since $C$ is a clique, there is at most one permutation of $C$ mapping $1$ to $2$. Similarly, there is at most one permutation of $C$ that maps $1$ to $i$, for any $i\in \{3,4,\ldots,n\}$. We conclude that
\begin{align}
	\omega(\Gamma_G) \leq n.\label{eq:largest_clique}
\end{align}

The chromatic number of $\Gamma_G$ behaves in the same manner. Let $G_{i\to j}$ be the set of all permutations of $G$, which map $i$ to $j$, for $i,j\in [n]$. It is straightforward that $G_{i\to j}$ is a coclique of $\Gamma_G$ for any $i,j\in [n]$, and $G_{1\to 1}$ is the stabilizer of $1$ in $G$ . The $G_{1\to 1}$-cosets of $G$ are $G_{1\to 1},\ G_{1\to 2},\ \ldots,G_{1\to n}$. Therefore, assigning distinct colors to these $G_{1\to 1}$-cosets yields a proper coloring of $\Gamma_G$. In other words,
\begin{align}
	\chi(\Gamma_G) \leq n.
\end{align}

The first bound on the independence  number that we present is the \emph{clique-coclique} bound.
\begin{lem}[{Clique-coclique} \cite{godsil2016erdos}]
	Let $X = (V,E)$ be a vertex-transitive graph. Then $\alpha(X)\omega(X) \leq |V(X)|$. Equality holds if and only if each clique intersects each coclique at a unique vertex.\label{lem:clique-coclique}
\end{lem}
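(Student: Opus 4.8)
The plan is to prove the inequality by a double-counting argument over the automorphism group $G := \Aut(X)$, exploiting vertex-transitivity, and then to read off the equality condition from precisely when the count is tight. The one structural fact I would use repeatedly is elementary: a clique and a coclique share at most one vertex, since two common vertices would be simultaneously adjacent (lying in the clique) and non-adjacent (lying in the coclique), a contradiction.

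First I would fix a clique $C$ with $|C| = \omega(X)$ and a coclique $S$ with $|S| = \alpha(X)$. For $g \in G$ write $C^g = \{c^g : c \in C\}$; each $C^g$ is again a clique of size $\omega(X)$. I would then count the set of pairs
\[
  P = \{(g,v) : g \in G,\ v \in S \cap C^g\}
\]
in two ways. On one hand, for each fixed $g$ the slice $S \cap C^g$ has size at most $1$ by the elementary fact above, so $|P| \leq |G|$. On the other hand, since $X$ is vertex-transitive, the orbit--stabilizer theorem gives $|G_u| = |G|/|V(X)|$ for every vertex $u$, so for any ordered pair of vertices there are exactly $|G|/|V(X)|$ automorphisms carrying the first to the second. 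Fixing $v \in S$, the sets $\{g : c^g = v\}$ for distinct $c \in C$ are disjoint (an automorphism is injective), and their union is exactly $\{g : v \in C^g\}$; hence the number of $g$ with $v \in C^g$ equals $|C|\cdot|G|/|V(X)|$. Summing over $v \in S$ yields
\[
  |P| = \alpha(X)\,\omega(X)\,\frac{|G|}{|V(X)|}.
\]
Comparing the two estimates gives $\alpha(X)\,\omega(X)\,|G|/|V(X)| \leq |G|$, that is, $\alpha(X)\,\omega(X) \leq |V(X)|$.

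For the equality condition I would note that the two bounds on $|P|$ agree exactly when every slice attains its maximum, i.e. $|S \cap C^g| = 1$ for all $g \in G$. The delicate point, and the step I expect to require the most care, is upgrading this from the single pair $(C,S)$ to \emph{all} maximum cliques and maximum cocliques. Here I would stress that the counting used nothing about $C$ and $S$ beyond their sizes: so if $\alpha(X)\omega(X) = |V(X)|$, rerunning the count with any maximum clique $C'$ in place of $C$ forces $|S \cap (C')^g| = 1$ for every $g$, and taking $g$ to be the identity gives $|C' \cap S| = 1$. Symmetrically, replacing $S$ by an arbitrary maximum coclique $S'$ gives $|C \cap S'| = 1$, and the two replacements together yield $|C' \cap S'| = 1$ for every maximum clique $C'$ and maximum coclique $S'$. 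Conversely, if every maximum clique meets every maximum coclique in a unique vertex, then in particular $|S \cap C^g| = 1$ for all $g$ (each $C^g$ being a maximum clique), so $|P| = |G| = \alpha(X)\omega(X)\,|G|/|V(X)|$, forcing $\alpha(X)\omega(X) = |V(X)|$. This closes both directions and completes the argument.
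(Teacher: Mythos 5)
Your proof is correct. Note that the paper does not prove this lemma at all---it states it with a citation to Godsil and Meagher \cite{godsil2016erdos}---and your double count of the pairs $(g,v)$ with $v \in S \cap C^g$ over $G = \Aut(X)$, using orbit--stabilizer plus the fact that a clique and a coclique share at most one vertex, is precisely the standard argument from that reference, including the tightness analysis: the exact count $|P| = \alpha(X)\omega(X)|G|/|V(X)|$ versus the slice bound $|S \cap C^g| \leq 1$ correctly yields both directions of the equality characterization, and your rerunning of the count with arbitrary maximum $C'$, $S'$ legitimately upgrades the conclusion beyond the initially fixed pair. One small reading point: the lemma's phrase ``each clique intersects each coclique at a unique vertex'' must be interpreted as each \emph{maximum} clique and each \emph{maximum} coclique, as you do; taken literally it would be false (in the $4$-cycle, where equality holds, a single vertex is a clique disjoint from one of the maximum cocliques).
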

\begin{exa}
	We use the clique-coclique bound to prove that the dihedral group $D_{2n}$ acting on the regular $n$-gon has the EKR property. Let $\Gamma$ be the derangement graph of $D_{2n}$. Since $D_{2n}$ contains an $n$-cycle $\sigma$, the subgroup $C = \langle \sigma \rangle$ is a transitive subgroup of $D_{2n}$ of order $n$ and so every non-identity element of $C$ is a derangement. In particular, $C$ is a regular subgroup of $D_{2n}$, meaning that $C$ is a clique of size $n$. By \eqref{eq:largest_clique}, we have $\omega(\Gamma) = n$. By Lemma~\ref{lem:clique-coclique}, we have $\alpha(\Gamma) \leq \frac{|D_{2n}|}{\omega(\Gamma)} \leq \frac{2n}{n} = 2$, which is equal to the size of a stabilizer of a point of $D_{2n}$. Hence, $\alpha(\Gamma) = 2.$
\end{exa}

Next, we will see an upper bound on the size of a coclique in function of the eigenvalues of the graph (i.e., eigenvalues of the adjacency matrix of the graph). This upper bound is known as the Hoffman bound, the Delsarte bound, or the ratio bound \cite{delsarte1973algebraic,godsil2016erdos}. In this paper, we will refer to it as the ratio bound.

Let $G\leq \sym(n)$. The \emph{characteristic vector} $v_S$ of $S\subset G$ is the $\{0,1\}$-vector indexed by elements of $G$ such that the entry $v_S(g)$ is $1$ if $g\in S$ and $0$ otherwise. 

\begin{lem}[Ratio bound \cite{delsarte1973algebraic,godsil2016erdos,haemers2021hoffman}]
	Let $X=(V,E)$ be a $d$-regular graph and let $\tau$ be the smallest eigenvalue of $X$. Then 
	\begin{align*}
	\alpha(X) \leq |V(X)| \left( 1- \frac{d}{\tau} \right)^{-1}.
	\end{align*}
	If $S$ is a maximum coclique attaining the upper bound and $v_S$ is the characteristic vector of $S$, then $v_S - \frac{|S|}{|V|}\mathbf{1}$ is a $\tau$-eigenvector of the adjacency matrix of $X$, where $\mathbf{1}$ is the all one vector of $\mathbb{R}^{|V(X)|}$.
\end{lem}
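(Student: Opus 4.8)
The plan is to prove this by the standard spectral (variational) argument applied to the adjacency matrix $A$ of $X$. Since $X$ is $d$-regular, the all-ones vector $\mathbf{1}$ is a $d$-eigenvector, and because $A$ is real symmetric it admits an orthonormal basis of eigenvectors $u_1,\dots,u_{|V|}$ with eigenvalues $d = \theta_1 \geq \theta_2 \geq \cdots \geq \theta_{|V|} = \tau$, where we may take $u_1 = \mathbf{1}/\sqrt{|V|}$. Writing $n := |V|$ and $s := |S|$ for a coclique $S$ with characteristic vector $v_S$, the central object is the vector $w := v_S - \frac{s}{n}\mathbf{1}$, which is orthogonal to $\mathbf{1}$ since $\mathbf{1}^\top v_S = s$.

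First I would evaluate the quadratic form $w^\top A w$ exactly. Because $S$ is a coclique, there are no edges inside $S$, so $v_S^\top A v_S = 0$; combined with $A\mathbf{1} = d\mathbf{1}$, $\mathbf{1}^\top v_S = s$, and $\mathbf{1}^\top \mathbf{1} = n$, expanding $w^\top A w$ gives $w^\top A w = -\frac{d s^2}{n}$. Likewise $\|w\|^2 = w^\top w = s\left(1 - \frac{s}{n}\right)$. Next I would bound $w^\top A w$ from below. Expanding $w = \sum_i c_i u_i$ and using $w \perp \mathbf{1}$ forces $c_1 = 0$, so that $w^\top A w = \sum_{i \geq 2} \theta_i c_i^2 \geq \tau \sum_{i\geq 2} c_i^2 = \tau \|w\|^2$, as every $\theta_i \geq \tau$.

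Combining the two computations yields $-\frac{d s^2}{n} \geq \tau\, s\left(1 - \frac{s}{n}\right)$, and dividing by $s$ and rearranging gives $\frac{s}{n}(\tau - d) \geq \tau$. The one point requiring care is the sign: since $X$ has at least one edge, $\tau < 0 \leq d$, so $\tau - d < 0$ and dividing by it flips the inequality, producing $s \leq n \cdot \frac{\tau}{\tau - d} = n\left(1 - \frac{d}{\tau}\right)^{-1}$, which is the claimed bound once we take $S$ to be a maximum coclique.

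For the equality statement, I would observe that the only inequality used above is $\sum_{i\geq 2}\theta_i c_i^2 \geq \tau \sum_{i\geq 2} c_i^2$; this is tight precisely when $c_i = 0$ for every $i$ with $\theta_i > \tau$, i.e.\ when $w$ lies entirely in the $\tau$-eigenspace of $A$. Hence, whenever a maximum coclique $S$ attains the bound, $w = v_S - \frac{|S|}{|V|}\mathbf{1}$ is a $\tau$-eigenvector, as asserted. Since every step is elementary linear algebra, I expect no genuine obstacle beyond tracking the sign of $\tau - d$ correctly in the final rearrangement.
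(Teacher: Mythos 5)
Your proof is correct. The paper does not prove this lemma at all --- it is quoted as a known background result with citations to Delsarte, Godsil--Meagher, and Haemers --- so there is no internal proof to compare against; your argument is the standard quadratic-form computation ($v_S^\top A v_S = 0$ for a coclique, $w^\top A w = -ds^2/n$, $\|w\|^2 = s(1-s/n)$, and $w^\top A w \ge \tau\|w\|^2$) that appears in those references, with the sign of $\tau - d$ and the equality case both handled correctly.
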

The ratio bound can also be generalized for graphs related to weighted adjacency matrices. A \emph{weighted adjacency matrix} $A$ corresponding to a graph $X=(V,E)$ is a real symmetric matrix with constant row and column sum, which is related to the graph $X=(V,E)$ in the sense that $A(i,j) = 0$ if $i\not\sim_X j$.
\begin{lem}[Weighted ratio bound \cite{godsil2016erdos}]
	Let $X = (V,E)$ be a graph, and $A$ be a weighted adjacency matrix of $X$. Suppose that $\tau$ is the minimum eigenvalue of $A$ and that the row sum of $A$ is equal to $d$. Then,
	\begin{align*}
	\alpha(X) \leq |V(X)| \left( 1- \frac{d}{\tau} \right)^{-1}.
	\end{align*}
		If $S$ is a maximum coclique attaining the upper bound, then $v_s - \frac{|S|}{|V|}\mathbf{1}$ is a $\tau$-eigenvector of $A$.\label{lem:ratio-bound}
\end{lem}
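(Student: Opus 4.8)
The plan is to run the standard Hoffman--Delsarte argument, adapted to the weighted setting, by pairing the characteristic vector of a coclique against the quadratic form defined by $A$. First I would record the two structural facts that make the argument work. Since $A$ has constant row (and column) sum $d$, the all-ones vector $\mathbf{1}$ is an eigenvector of $A$ with eigenvalue $d$; and since $A$ is real symmetric, the spectral theorem provides an orthonormal basis $\{u_1,u_2,\ldots,u_{|V|}\}$ of $\mathbb{R}^{|V|}$ consisting of eigenvectors of $A$, which I may choose so that $u_1 = \frac{1}{\sqrt{|V|}}\mathbf{1}$ with associated eigenvalue $d$. Writing $\lambda_k$ for the eigenvalue of $u_k$, we have $\tau = \min_k \lambda_k$; since the diagonal of $A$ vanishes (a graph has no loops, so $i\not\sim_X i$ forces $A(i,i)=0$), its trace $\sum_k \lambda_k$ is $0$, so $A$ has a strictly negative eigenvalue unless $A=0$. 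Thus in the nontrivial case $\tau < 0 < d$ and $d-\tau > 0$, which legitimizes the divisions below.

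Next I would exploit the defining property of the weighted adjacency matrix. Let $S$ be a coclique with characteristic vector $v = v_S$. For distinct $i,j\in S$ we have $i\not\sim_X j$, hence $A(i,j)=0$; combined with the vanishing diagonal this gives
\begin{align*}
v^{\mathsf T} A v = \sum_{i,j\in S} A(i,j) = 0.
\end{align*}
Expanding $v = \sum_k c_k u_k$ with $c_k = \langle v, u_k\rangle$, I would use $c_1 = \frac{|S|}{\sqrt{|V|}}$ and the Parseval identity $\sum_k c_k^2 = \|v\|^2 = |S|$, together with $v^{\mathsf T} A v = \sum_k \lambda_k c_k^2$, to obtain
\begin{align*}
0 = d\,c_1^2 + \sum_{k\geq 2}\lambda_k c_k^2 \geq d\,c_1^2 + \tau\bigl(|S| - c_1^2\bigr),
\end{align*}
where the inequality replaces each $\lambda_k$ by the minimum $\tau$.

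Then I would substitute $c_1^2 = \frac{|S|^2}{|V|}$, divide by $|S|>0$, and set $x = |S|/|V|$ to reduce the inequality to $\tau + (d-\tau)x \leq 0$. Solving yields $x \leq \frac{-\tau}{d-\tau} = \bigl(1 - \frac{d}{\tau}\bigr)^{-1}$, i.e. $|S| \leq |V(X)|\bigl(1 - \frac{d}{\tau}\bigr)^{-1}$, and maximizing over cocliques gives the stated bound on $\alpha(X)$. For the equality case, if $S$ meets the bound then every inequality above is tight, so $\sum_{k\geq 2}(\lambda_k - \tau)c_k^2 = 0$ forces $c_k = 0$ for each $k\geq 2$ with $\lambda_k > \tau$. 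Hence $v$ lies in the span of $u_1$ and the $\tau$-eigenvectors, and since $\tau \neq d$ the remainder $v_S - c_1 u_1 = v_S - \frac{|S|}{|V|}\mathbf{1}$ is exactly the projection of $v_S$ onto the $\tau$-eigenspace, proving it is a $\tau$-eigenvector of $A$.

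The only genuinely delicate point is the identity $v^{\mathsf T} A v = 0$, where the hypothesis $A(i,j)=0$ whenever $i\not\sim_X j$ enters decisively together with the absence of loops; everything else is the spectral bookkeeping of the unweighted ratio bound, which goes through verbatim once $\mathbf{1}$ is recognized as the $d$-eigenvector. I would take a little care to record $\tau < 0$ via the trace-zero observation (so that $1 - d/\tau > 0$ and the divisions are valid), handling the trivial edgeless case $A=0$ separately.
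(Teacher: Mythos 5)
Your proof is correct: it is the standard Hoffman--Delsarte spectral-decomposition argument, which is exactly the proof given in the cited source \cite{godsil2016erdos} (the paper itself states this lemma without proof, deferring to that reference). The only hypothesis worth flagging is your claim $d>0$ in the nontrivial case --- since weights may be negative (as in the paper's equation \eqref{eq:solution_odd}, where $\omega_4<0$), the trace-zero argument only yields $\tau<0$, and what the division actually needs is $d>\tau$, which holds whenever $\mathbf{1}$ is not a minimum eigenvector and is implicit in the lemma's statement that $\bigl(1-\frac{d}{\tau}\bigr)^{-1}$ is defined.
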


\subsection{Eigenvalues of normal Cayley graphs and association schemes}
 
The following result gives the eigenvalues of normal Cayley graphs, i.e., Cayley graphs whose connection set is invariant under conjugation. 
\begin{thm}[\cite{babai1979spectra}]
	Let $X = \operatorname{Cay}(G,C)$ be a Cayley graph such that $C$ is invariant under conjugation. Let $ (\mathfrak{X}_1,V_1),\ (\mathfrak{X}_2, V_2),\ldots,\ (\mathfrak{X}_k,V_k )$ be a complete list of distinct irreducible representations of $G$ and let $\chi_i$ be the character afforded by $\mathfrak{X}_i$, for any $i\in \{1,2,\ldots,k\}$. Then,
	\begin{align*}
		\mathbb{C}G \cong \bigoplus_{i=1}^k  U_i,
	\end{align*}
	where $U_i$ is the sum of all submodules  isomorphic to $V_i$ in the regular $\mathbb{C}G$-module.
	Moreover, $U_i$ is an eigenspace of $X$ with eigenvalue 
	\begin{align*}
		\xi_{\mathfrak{X}_i} = \frac{1}{\dim \mathfrak{X}_i} \sum_{g\in C} \chi_i(g),
	\end{align*}
	for any $i \in \{1,2,\ldots,k\}$.\label{lem:eig-norm}
\end{thm}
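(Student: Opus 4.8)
The plan is to realize the adjacency matrix $A$ of $X$ as the operator of left multiplication by the group-algebra element $\widehat{C} := \sum_{c\in C} c$ on the regular module $\mathbb{C}G$, and then to exploit that $\widehat{C}$ is central precisely because $C$ is a union of conjugacy classes. First I would fix the standard basis $\{e_g : g\in G\}$ of $\mathbb{C}G$ and let $L$ denote the left regular representation, so that $(L_a v)(g) = v(a^{-1}g)$; a routine check shows $L_a L_b = L_{ab}$. Computing $(Av)(g) = \sum_{h}[gh^{-1}\in C]\,v(h)$ and substituting $h = c^{-1}g$ gives $(Av)(g) = \sum_{c\in C} v(c^{-1}g) = \sum_{c\in C}(L_c v)(g)$, so that $A = \sum_{c\in C} L_c = L_{\widehat{C}}$. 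Here inverse-closure of $C$ (assumed in the definition of a Cayley graph) guarantees that $A$ is symmetric, so its eigenvalues are real and its eigenspaces are genuine subspaces of $\mathbb{C}G$.

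Next I would invoke the decomposition of the regular representation. By Maschke's theorem and the Wedderburn structure of $\mathbb{C}G$, the left regular module decomposes as $\mathbb{C}G = \bigoplus_{i=1}^k U_i$, where $U_i$ is the $V_i$-isotypic component (the sum of all left submodules isomorphic to $V_i$), and $\dim U_i = (\dim \mathfrak{X}_i)^2$. This is exactly the first displayed isomorphism in the statement, so the only remaining task is to determine how $A = L_{\widehat{C}}$ acts on each $U_i$.

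The heart of the argument is a Schur's-lemma computation. Since $C$ is invariant under conjugation, for every $g\in G$ we have $\mathfrak{X}_i(g)\,\mathfrak{X}_i(\widehat{C})\,\mathfrak{X}_i(g)^{-1} = \mathfrak{X}_i(g\widehat{C}g^{-1}) = \mathfrak{X}_i(\widehat{C})$, so $\widehat{C}$ is central in $\mathbb{C}G$ and $\mathfrak{X}_i(\widehat{C}) = \sum_{c\in C}\mathfrak{X}_i(c)$ commutes with the irreducible action $\mathfrak{X}_i$. By Schur's lemma it must equal a scalar $\xi_{\mathfrak{X}_i}\,I$ on $V_i$; taking traces yields $(\dim\mathfrak{X}_i)\,\xi_{\mathfrak{X}_i} = \sum_{c\in C}\chi_i(c)$, which is the claimed formula $\xi_{\mathfrak{X}_i} = \tfrac{1}{\dim\mathfrak{X}_i}\sum_{c\in C}\chi_i(c)$. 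Because the scalar action of a central element is preserved under module isomorphism, $\widehat{C}$ acts as the same scalar $\xi_{\mathfrak{X}_i}$ on every copy of $V_i$ inside $\mathbb{C}G$, hence on the whole isotypic component $U_i$. Therefore $A$ restricts to $\xi_{\mathfrak{X}_i}\,\mathrm{Id}$ on $U_i$, and since the $U_i$ span $\mathbb{C}G$ this accounts for the full spectrum with multiplicities $(\dim\mathfrak{X}_i)^2$.

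I expect the only delicate points to be bookkeeping rather than depth: one must track the left-versus-right multiplication convention carefully so that the substitution $h=c^{-1}g$ reproduces the adjacency matrix exactly, and one should note that distinct indices $i$ may yield the same scalar $\xi_{\mathfrak{X}_i}$, in which case the true $\xi$-eigenspace of $A$ is the direct sum of the corresponding $U_i$. Thus the statement that ``$U_i$ is an eigenspace'' is understood as the assertion that $A$ acts as the scalar $\xi_{\mathfrak{X}_i}$ on $U_i$. Finally, consistency with $A$ being real symmetric is confirmed by observing that inverse-closure of $C$ together with $\chi_i(c^{-1}) = \overline{\chi_i(c)}$ forces each $\xi_{\mathfrak{X}_i}$ to be real.
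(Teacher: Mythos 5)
Your proof is correct. The paper does not actually prove this statement---it is quoted from Babai \cite{babai1979spectra} and used as a black box---so there is no internal proof to compare against; your argument (identify $A$ with left multiplication by $\widehat{C}=\sum_{c\in C}c$ on the regular module, observe that $\widehat{C}$ is central because $C$ is conjugation-invariant, apply Schur's lemma on each irreducible and take traces to get $\xi_{\mathfrak{X}_i}=\frac{1}{\dim\mathfrak{X}_i}\sum_{c\in C}\chi_i(c)$, then transport the scalar to each isotypic component $U_i$) is precisely the standard proof of this classical fact. Your closing caveats are also the right ones to flag: the left/right multiplication convention must match the paper's adjacency rule $gh^{-1}\in C$, and since distinct $U_i$ may share a scalar, ``$U_i$ is an eigenspace'' should be read as ``$A$ restricts to $\xi_{\mathfrak{X}_i}\,\mathrm{Id}$ on $U_i$,'' which is exactly how the theorem is used later in the paper.
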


We give a generalization of Theorem~\ref{lem:eig-norm} for certain weighted adjacency matrices. These weighted adjacency matrices will involve certain spanning subgraphs of a normal Cayley graph. First, we define an association scheme induced by the group $G$, then we use this association scheme to write the eigenvalues of the weighted adjacency matrix.

Let $G$ be a group with distinct non-identity conjugacy classes $C_1,C_2,\ldots,C_k$; that is, $C_{i}\neq \{id\}$ for any $i\in \{1,2,\ldots,k\}$. The \emph{conjugacy class scheme} of $G$ is the association scheme given by the matrices $(A_i)_{i=1,\ldots,k}$, where $A_i$ is the $|G|\times |G|$ matrix whose rows and columns are indexed by the group elements and whose $(u,v)$-entry is
\begin{align*}
	A_i(u,v) =
	\left\{
		\begin{aligned}
			&1 \ \ \mbox{ if } uv^{-1} \in C_i,\\
			&0 \ \ \mbox{ otherwise.}
		\end{aligned}
	\right.
\end{align*}
The matrix $A_i$ is the adjacency matrix of the digraph $\operatorname{Cay}(G,C_i)$, for any $i\in \{1,2\ldots,k\}$. Note that when the conjugacy classes of $G$ are inverse-closed, then the matrices $(A_i)_{i=1,2,\ldots,k}$ are symmetric and $\operatorname{Cay}(G,C_i)$ is an undirected graph, for any $i\in \{1,2,\ldots,k\}$. Since the conjugacy classes of the symmetric group are inverse-closed, its conjugacy class scheme is a symmetric association scheme.

For the remainder of this subsection, we suppose that the conjugacy classes of $G$ are inverse-closed. It is easy to see that any real linear combination of the matrices $(A_i)_{i=1,\ldots,k}$ is a weighted adjacency matrix that corresponds to a certain spanning subgraph of the normal Cayley graph $\operatorname{Cay}(G, G\setminus \{id\})$.

\begin{lem}[\cite{godsil2016erdos}]
	Let $(\omega_i)_{i=1,\ldots,k} \subset \mathbb{R}$ and $G$ be a group with non-identity conjugacy classes $C_1,C_2,\ldots,C_k$. Let $A = \sum_{i=1}^k \omega_iA_i$, where $A_i$ is the matrix of the conjugacy class $C_i $ in the conjugacy class scheme of $G$, for any $i\in \{1,2,\ldots,k\}$. The eigenvalues of $A$ are of the form
	\begin{align*}
		\xi_{\chi} &= \sum_{i=1}^k \omega_i |C_i| \frac{\chi(g_i)}{\chi(id)},
	\end{align*}
	where $\chi \in \operatorname{Irr}(G)$, and $g_1,g_2,\ldots,g_k$ are, respectively, representatives of the conjugacy classes $C_1,C_2,\ldots, C_k$.\label{lem:evalue-wadj}
\end{lem}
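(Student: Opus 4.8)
The plan is to prove Lemma~\ref{lem:evalue-wadj} by combining the spectral decomposition of each individual conjugacy-class matrix $A_i$ (which is exactly the statement of Theorem~\ref{lem:eig-norm} applied to the normal Cayley graph $\operatorname{Cay}(G,C_i)$) with the fact that all these matrices are simultaneously diagonalizable because they share the same eigenspaces. First I would invoke Theorem~\ref{lem:eig-norm} for each fixed $i$: since $C_i$ is invariant under conjugation, $\operatorname{Cay}(G,C_i)$ is a normal Cayley graph, and its adjacency matrix $A_i$ acts as a scalar on each isotypic component $U_\chi$ of the regular $\mathbb{C}G$-module, with eigenvalue
\begin{align*}
	\xi^{(i)}_\chi = \frac{1}{\chi(id)} \sum_{g\in C_i} \chi(g) = |C_i|\,\frac{\chi(g_i)}{\chi(id)},
\end{align*}
where the last equality uses that $\chi$ is constant on the conjugacy class $C_i$ with representative $g_i$, so the sum over $C_i$ reduces to $|C_i|\chi(g_i)$. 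The key observation making the whole argument work is that the isotypic decomposition $\mathbb{C}G \cong \bigoplus_\chi U_\chi$ is the \emph{same} for every $i$; it depends only on $G$, not on the particular conjugacy class chosen as connection set.

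Next I would exploit this common eigenspace structure. Because each $U_\chi$ is an eigenspace for every $A_i$ simultaneously, any real linear combination $A = \sum_{i=1}^k \omega_i A_i$ also has $U_\chi$ as an eigenspace, and its eigenvalue on $U_\chi$ is simply the corresponding linear combination of the individual eigenvalues. Concretely, for $v\in U_\chi$ we have $A_i v = \xi^{(i)}_\chi v$, hence
\begin{align*}
	A v = \sum_{i=1}^k \omega_i A_i v = \left(\sum_{i=1}^k \omega_i \xi^{(i)}_\chi\right) v = \left(\sum_{i=1}^k \omega_i |C_i|\,\frac{\chi(g_i)}{\chi(id)}\right) v,
\end{align*}
which is exactly the claimed formula for $\xi_\chi$. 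Since the subspaces $U_\chi$ span all of $\mathbb{C}G$, this accounts for the full spectrum of $A$, so these are precisely the eigenvalues of $A$.

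The only genuine subtlety, rather than a serious obstacle, is justifying that the decomposition into isotypic components is common to all the $A_i$ and is a direct-sum decomposition of the entire regular module. This is built into Theorem~\ref{lem:eig-norm} itself, which identifies $U_i$ (the sum of all submodules isomorphic to $V_i$) as an eigenspace and gives $\mathbb{C}G \cong \bigoplus_i U_i$; crucially the module structure of $\mathbb{C}G$ and hence this decomposition is independent of which connection set one uses, so I would simply remark that the same $U_\chi$ serve for every $A_i$. One should also note the symmetry hypothesis already in force for this subsection: the conjugacy classes are inverse-closed, so each $A_i$ is a genuine (symmetric) adjacency matrix and $A$ is a real symmetric weighted adjacency matrix, which is what allows the eigenvalues $\xi_\chi$ to be real and the ratio-bound machinery of Lemma~\ref{lem:ratio-bound} to apply later. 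With these points in place the proof is essentially a one-line linearity argument on top of Theorem~\ref{lem:eig-norm}.
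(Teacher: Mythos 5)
Your proof is correct and is exactly the standard argument: the paper itself gives no proof of this lemma (it is quoted from Godsil--Meagher), and the cited source establishes it just as you do, by applying Theorem~\ref{lem:eig-norm} to each class matrix $A_i$, observing that the isotypic components $U_\chi$ of $\mathbb{C}G$ are common eigenspaces for all the $A_i$, and extending by linearity. Nothing is missing.
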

\section{Representation theory of $\sym(n)$}\label{sect:representation}
In this section, we prove some results on the complex representations of the symmetric group that are needed in our proof of Theorem~\ref{thm:main}. For the basic definitions and notions on the representation theory of the symmetric group, we refer the reader to the textbook of Sagan \cite{sagan2001symmetric}. In particular, the reader is expected to be familiar with the Specht modules \cite[Section~2.3]{sagan2001symmetric}, the Branching Rule \cite[Section~2.8]{sagan2001symmetric}, the Hook Length Formula \cite[Section~3.10]{sagan2001symmetric} and the Murnaghan-Nakayama Rule \cite[Section~4.10]{sagan2001symmetric}. 

\subsection{Character values}
Let $\lambda \vdash n$. 
We denote the irreducible character corresponding to the $\lambda$-Specht module of $\sym(n)$ by $\chi^\lambda$.  
For any element $\sigma \in \sym(n)$ in the conjugacy class of cycle type $(p_1,p_2,\ldots,p_k)$, we define
\begin{align*}
	\chi_{(p_1,p_2,\ldots,p_k)}^\lambda := \chi^\lambda(\sigma).
\end{align*}
Let $\lambda = \left[ \lambda_1,\lambda_2,\ldots,\lambda_k  \right] \vdash n$ and $\mu = \left[\mu_1,\mu_2,\ldots,\mu_l\right]\vdash m$, where $m< n$. We say that $\lambda$ \emph{contains} $\mu$, and write $\mu \subset \lambda$ if $\mu_i \leq \lambda_i$, for all $i\in \{1,2,\ldots,l\}$. When $\mu \subset \lambda$, the corresponding \emph{skew diagram} $\lambda/\mu$ is the set of cells of $\lambda$ that are not in $\mu$. That is, 
\begin{align*}
	\lambda/ \mu = \left\{ c\in \lambda \mid c\not\in \mu \right\}.
\end{align*} 
A \emph{rim hook} $\zeta$ of a Young diagram $\lambda \vdash n$ is a skew diagram of $\lambda$ whose cells are on a path with upward and rightward steps. The \emph{leg length} $\ell\ell(\zeta)$ of the rim hook $\zeta$ of $\lambda$ is the number of rows it spans minus $1$. 

Given a partition $\lambda$ and a rim hook $\zeta$ of $\lambda$, we define $\lambda\setminus \zeta$ to be the set of cells of $\lambda$ that are not in $\zeta$. Since $\zeta$ is a skew diagram, $\lambda \setminus \zeta$ is a Young diagram.

Given a composition $\rho = (a,\rho_2,\ldots,\rho_k)$ of $n$, we define the operation $\rho \setminus a$ to be the composition of $n-a$ obtained by deletion of the first entry (which is $a$) of $\rho$.

The next lemma is the famous  recursive Murnaghan-Nakayama rule. We state it for the sake of completeness.
\begin{lem}[Murnaghan-Nakayama rule \cite{sagan2001symmetric}]
	 Let $\lambda \vdash n$ and $\rho = (\rho_1,\rho_2,\ldots,\rho_k)$ be a composition of $n$. Then,
	\begin{align*}
	\chi^\lambda_{\rho} &= \longsum_{\zeta \in {RH_{\rho_1}}(\lambda)} (-1)^{\ell\ell(\zeta)} \chi_{\rho \backslash \rho_1}^{\lambda\setminus \zeta},
	\end{align*}
	where ${ RH}_{\rho_1}(\lambda)$ is the set of all rim hooks of length $\rho_1$ of $\lambda$.\label{lem:MurnNakRule}
\end{lem}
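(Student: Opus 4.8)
The plan is to prove the Murnaghan--Nakayama rule by the standard route through the Frobenius character formula and the Jacobi--Trudi determinant, rather than attempting a direct combinatorial induction on tableaux. The key tool is the relationship between the irreducible characters $\chi^\lambda$ and symmetric functions: under the characteristic map $\operatorname{ch}$, the Specht module character $\chi^\lambda$ corresponds to the Schur function $s_\lambda$, while the class function that is the indicator of cycle type $\rho$ (suitably normalized) corresponds to the power-sum symmetric function $p_\rho = p_{\rho_1} p_{\rho_2}\cdots p_{\rho_k}$. Under this dictionary, the character value $\chi^\lambda_\rho$ is precisely the coefficient extracted when one expands $p_\rho$ in the Schur basis, equivalently $\chi^\lambda_\rho = \langle s_\lambda, p_\rho\rangle$ in the Hall inner product. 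Since $\rho_1$ is split off first, it suffices to understand how multiplication by the single power sum $p_{\rho_1}$ acts on Schur functions.

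First I would establish the Pieri-type expansion for multiplication by a power sum, namely the classical identity
\begin{align*}
	p_r \, s_\mu = \sum_{\zeta} (-1)^{\ell\ell(\zeta)} s_{\mu \cup \zeta},
\end{align*}
where the sum is over all ways of adding a rim hook $\zeta$ of length $r$ to $\mu$, and $\ell\ell(\zeta)$ is its leg length. The cleanest derivation of this uses the Jacobi--Trudi determinant $s_\mu = \det(h_{\mu_i - i + j})$ together with Newton's identity expressing $p_r$ in terms of complete homogeneous symmetric functions, or alternatively the generating-function manipulation of the Schur functions via the infinite $GL$-character/vertex-operator viewpoint; in either case the signs $(-1)^{\ell\ell(\zeta)}$ arise from row swaps in the determinant when a border strip is added across several rows. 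With this identity in hand, I would dualize: taking the Hall inner product of both sides against $s_\lambda$ and using adjointness of multiplication by $p_r$ (its adjoint is the differential operator $p_r^\perp$, and $\langle p_r s_\mu, s_\lambda\rangle = \langle s_\mu, p_r^\perp s_\lambda\rangle$) gives
\begin{align*}
	\chi^\lambda_\rho = \langle s_\lambda, p_\rho \rangle = \langle s_\lambda, p_{\rho_1} p_{\rho\setminus \rho_1}\rangle = \sum_{\zeta \in RH_{\rho_1}(\lambda)} (-1)^{\ell\ell(\zeta)} \langle s_{\lambda \setminus \zeta}, p_{\rho\setminus\rho_1}\rangle,
\end{align*}
and the final inner product is exactly $\chi^{\lambda\setminus\zeta}_{\rho\setminus\rho_1}$, which is the claimed recursion.

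I expect the main obstacle to be proving the power-sum Pieri rule with the correct sign $(-1)^{\ell\ell(\zeta)}$ cleanly, since this is where all the combinatorial content lives; the sign bookkeeping when a rim hook is removed from several rows is the delicate part, and one must verify that non-rim-hook removals (which would disconnect the diagram or leave a non-partition shape) contribute zero. Everything else --- the characteristic map being an isometry sending $\chi^\lambda \mapsto s_\lambda$, and the Hall-inner-product reformulation of character values --- is standard from the representation theory of $\sym(n)$ as developed in Sagan's text, so I would cite those facts rather than reprove them. An alternative, more self-contained route avoids symmetric functions entirely and argues directly on the level of modules by the branching rule: one decomposes the action of an $\rho_1$-cycle by restricting to a Young subgroup and tracking how the Specht module $S^\lambda$ breaks up, but this tends to reintroduce the same sign computation in a less transparent form, so I would keep the symmetric-function proof as the primary argument.
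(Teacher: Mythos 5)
The paper does not prove this lemma at all: it is the classical Murnaghan--Nakayama rule, stated ``for the sake of completeness'' with a citation to Sagan's textbook, so there is no internal argument to compare yours against. Judged on its own, your outline is a correct and standard proof. The dictionary you invoke is right: under the characteristic map $\chi^\lambda \mapsto s_\lambda$, one has $p_\rho = \sum_{\lambda\vdash n}\chi^\lambda_\rho\, s_\lambda$, hence $\chi^\lambda_\rho = \langle s_\lambda, p_\rho\rangle$, and peeling off $p_{\rho_1}$ via the adjoint $p_{\rho_1}^\perp$ reduces everything to the power-sum Pieri rule
\begin{align*}
p_r\, s_\mu \;=\; \sum_{\zeta} (-1)^{\ell\ell(\zeta)}\, s_{\mu\cup\zeta},
\end{align*}
which dualizes to $p_r^\perp s_\lambda = \sum_{\zeta\in RH_r(\lambda)}(-1)^{\ell\ell(\zeta)} s_{\lambda\setminus\zeta}$ because the Schur functions are orthonormal for the Hall inner product. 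You correctly identify that all the combinatorial content sits in this one identity and its signs. For what it is worth, the proof in the cited source (Sagan, Section 4.10) is a close cousin of yours rather than something different in kind: it works with the bialternant formulation, expanding $p_r\, a_{\mu+\delta}$ as a signed sum of alternants $a_{\mu+\delta+re_i}$ and re-sorting the exponent vector, where the row transpositions produce exactly the factor $(-1)^{\ell\ell(\zeta)}$ and vanishing of alternants with repeated entries accounts for the ``illegal'' additions; your Jacobi--Trudi/Newton-identity route makes the same sign computation through row operations in the determinant $\det(h_{\mu_i-i+j})$. The inner-product formulation you chose buys a cleaner dualization step (no need to re-derive a removal rule separately from the addition rule), at the cost of quoting adjointness of $p_r^\perp$.

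To turn the sketch into a complete proof you would still need to actually carry out the Pieri-rule verification you flag as the delicate point: in the Jacobi--Trudi approach, after writing $p_r = \sum_{i\geq 1}(-1)^{i-1} e_{i}^{\perp}$-free form via $p_r = \sum_{j}(-1)^{j} \bigl(\text{signed } h,e \text{ combination}\bigr)$ or, more simply, expanding $p_r h_m$-products and re-sorting, one must check both that shapes obtained by adding $r$ to a single row of $\mu+\delta$ biject with rim-hook additions and that degenerate cases cancel. That is a known, finite piece of bookkeeping, and citing it (or Sagan's Theorem 4.10.2 directly, as the paper itself does) is entirely legitimate; there is no gap in the logical structure of your argument.
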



\subsection{Character degrees}
In this subsection, we find the irreducible representations of $\sym(n)$ of relatively small degree. To do this, we use the Hook Length Formula \cite[Section~3.10]{sagan2001symmetric} and the Branching Rule \cite[Section~2.8]{sagan2001symmetric}.

We define a coordinate system on Young diagrams. The coordinate of a Young diagram is given as follows:
\begin{enumerate}[i.]
	\item the origin $O$, of coordinate $(1,1)$, is the cell on the top left corner of the Young diagram,
	\item a cell of the Young diagram has coordinate $(i,j)$ if there is a path with $j-1$ rightward steps and $i-1$ downward steps from $O$ to the cell.
\end{enumerate}
If  the cell of coordinate $(i,j)$ (also called $(i,j)$-cell) exists in a Young diagram $\lambda$, then we write $(i,j) \in \lambda$. A $(1,.)$-cell and a $(.,1)$-cell of $\lambda$ are cells that are situated in the first row and the first column of $\lambda$, respectively. 


The following result is about the irreducible characters of $\sym(n)$ of relatively small degree. Our proof is similar to a proof in \cite{godsil2016erdos}.
\begin{lem}\label{lem:irrecharless}
	Let $n \geq 27$. If $\phi$ is a character of $\mathrm{Sym}(n)$ of degree less than $5\binom{n}{3}$ and $\chi^{\lambda}$ an irreducible constituent of $\phi$, then $\lambda$ is one of the following: $[n],[1^n],[n-1,1],[2,1^{n-2}],[n-2,2],[2^2,1^{n-4}],[n-2,1^2],[3,1^{n-3}],[n-3,3],[2^3,1^{n-6}],[n-3,1^3],[4,1^{n-4}],[n-3,2,1]$ or $[3,2,1^{n-5}]$. 
\end{lem}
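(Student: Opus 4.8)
The dimension of the Specht module is $f^\lambda := \chi^\lambda(\mathrm{id})$, and the three tools I would use are the Hook Length Formula, the Branching Rule, and the conjugation symmetry $f^\lambda = f^{\lambda'}$, which follows from the Hook Length Formula since transposing a Young diagram preserves the multiset of hook lengths. The threshold to beat is $5\binom{n}{3} = \tfrac{5n(n-1)(n-2)}{6}$, a cubic in $n$. The first observation is that the fourteen listed partitions form seven conjugate pairs, each pair sharing a common dimension, and the seven ``wide'' representatives are precisely the partitions $\lambda$ with $\lambda_1 \geq \lambda_1'$ and $n-\lambda_1 \leq 3$, namely $[n],[n-1,1],[n-2,2],[n-2,1^2],[n-3,3],[n-3,2,1],[n-3,1^3]$. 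Thus by conjugation symmetry it suffices to work under the normalization $\lambda_1 \geq \lambda_1'$.

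First I would dispatch the forward direction: using the Hook Length Formula, compute the seven dimensions, for instance $f^{[n-3,3]} = \binom{n}{3}-\binom{n}{2} = \tfrac{n(n-1)(n-5)}{6}$, $f^{[n-3,2,1]} = \tfrac{n(n-2)(n-4)}{3}$, and $f^{[n-3,1^3]} = \binom{n-1}{3}$, the remaining four being of degree at most $2$. Each is a polynomial in $n$ of degree at most $3$ whose leading coefficient is at most $\tfrac{2}{6}$, so each lies comfortably below $5\binom{n}{3}$ for every $n \geq 27$ (indeed for all large $n$). This confirms that the fourteen listed partitions have degree less than $5\binom{n}{3}$.

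The substance is the converse: if $\lambda_1 \geq \lambda_1'$ and $\lambda$ is not on the list, then $d := n-\lambda_1 \geq 4$, and I must show $f^\lambda \geq 5\binom{n}{3}$. The single computation that controls everything is $f^{[n-4,4]} = \binom{n}{4}-\binom{n}{3} = \tfrac{n(n-1)(n-2)(n-7)}{24}$, which satisfies $f^{[n-4,4]} \geq 5\binom{n}{3}$ exactly when $n-7 \geq 20$, that is $n \geq 27$; this is precisely the source of the hypothesis $n\geq 27$, and equality holds at $n=27$, so $[n-4,4]$ and its conjugate are correctly excluded. To reduce the general case to this one I would argue that among all $\lambda$ with fixed first part $\lambda_1 = n-d$ and $\lambda_1 \geq \lambda_1'$, the minimum of $f^\lambda$ is attained at the two-row shape $[n-d,d]$, whose dimension $\binom{n}{d}-\binom{n}{d-1}$ is increasing in $d$ for $d \leq n/2$ and hence bounded below by the $d=4$ value $f^{[n-4,4]}$; the complementary range $d>n/2$ consists of very ``fat'' shapes whose dimension exceeds the cubic bound by a wide margin. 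Equivalently, and perhaps more robustly, I would run an induction on $n$ via the Branching Rule: a non-listed $\lambda$ has at least two removable corners $c_1,c_2$ whose removal yields partitions $\mu^1,\mu^2 \vdash n-1$ still satisfying the inductive hypothesis, whence $f^\lambda \geq f^{\mu^1}+f^{\mu^2} \geq 2\cdot 5\binom{n-1}{3} \geq 5\binom{n}{3}$, the last inequality holding because $2\binom{n-1}{3} \geq \binom{n-1}{3}+\binom{n-1}{2} = \binom{n}{3}$ for $n \geq 6$.

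The main obstacle is twofold. First, the boundary case $d=4$ must be handled by the sharp Hook Length computation above: there is no slack, and the value $5\binom{n}{3}$ sits exactly between the cubic dimensions of the listed shapes and the quartic dimension $\tfrac{n(n-1)(n-2)(n-7)}{24}$ of $[n-4,4]$, which is what forces $n \geq 27$. Second, the reduction step must be made uniform over all tail shapes: either one proves that the two-row shape minimizes the dimension for fixed $\lambda_1$, or, in the inductive formulation, one must control the shapes with fewer than two usable removable corners, namely rectangles and near-rectangles, which I would treat separately by conjugation or by a direct and very generous lower bound, since such shapes are the ``fattest'' and have dimension far exceeding $5\binom{n}{3}$. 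Verifying the finitely many tail shapes at the base level $n=27$ is then a routine computation.
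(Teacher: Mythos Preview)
Your inductive route (b) via the Branching Rule is essentially the paper's strategy, but the step where you dispose of the boundary shapes contains a real error. You assert that the partitions with fewer than two ``usable'' removable corners are ``rectangles and near-rectangles'' whose dimension ``far exceeds $5\binom{n}{3}$''. This is false. Take $\lambda=[n-4,4]$: its two corner removals yield $[n-5,4]$ (not on the list for $n-1$) and $[n-4,3]=[(n-1)-3,3]$ (on the list), so only one corner is usable; yet $[n-4,4]$ is nowhere near rectangular, and its dimension $\tfrac{n(n-1)(n-2)(n-7)}{24}$ equals $5\binom{n}{3}$ exactly at $n=27$ --- your own sharp computation. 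The same phenomenon occurs for every non-listed $\lambda\vdash n$ obtained by adding a single box to a listed shape of $n-1$, for instance $[n-4,3,1]$, $[n-4,2^2]$, $[n-4,2,1^2]$, $[n-4,1^4]$ and their conjugates; none of these are rectangles, and each has quartic dimension with a modest leading coefficient. The paper handles precisely these shapes by enumerating them (Table~\ref{constcahr}) and checking each degree individually against the bound (Table~\ref{degreerepgre}); the genuinely rectangular case is then a separate, easy sub-case, treated by restricting one level further to $\sym(n-1)$. That explicit boundary enumeration is the piece your sketch is missing, and it cannot be replaced by the blanket claim you made.

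Your route (a), minimizing $f^\lambda$ over all $\lambda$ with $\lambda_1=n-d$ at the two-row shape $[n-d,d]$, would bypass the tables entirely if it works, but both of its ingredients need proof: the two-row-minimizer statement is plausible yet nontrivial, and the assertion that $\binom{n}{d}-\binom{n}{d-1}$ is increasing in $d$ for $d\le n/2$ is actually false (for $n=27$ it decreases from $d=12$ to $d=13$), though the minimum on $4\le d\le \lfloor n/2\rfloor$ does sit at $d=4$.
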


\begin{proof}
	Here, we follow the proof of \cite[Lemma~12.7.3]{godsil2016erdos} by using induction on $n$. Computation via \verb|Sagemath| \cite{sagemath} shows that the result holds for $n=27,28$. Now, we suppose that the lemma is true for $n$ and $n-1$. 
	
	Let $\phi$ be a character of $\sym(n+1)$ of degree less than $5\binom{n+1}{3}$. We may assume that $\phi$ is irreducible. Denote by $\phi\downharpoonleft _{[n]}$ (resp. $\phi\downharpoonleft _{[n-1]}$) the restriction of $\phi$ to $\mathrm{Sym}(n)$ (resp. $\mathrm{Sym}(n-1)$).
	
	First, assume that $\phi\downharpoonleft _{[n]}$ has a constituent which is one of the fourteen irreducible characters of $\mathrm{Sym}(n)$ of degree less than $5\binom{n}{3}$. From the branching rule (see \cite[ Theorem~2.8.3]{sagan2001symmetric}), $\phi$ is one of the irreducible characters given in the middle column of Table \ref{constcahr}. As the degree of any character in Table \ref{degreerepgre} is larger than $5\binom{n+1}{3}$ when $n \geq 27$, one can deduce that $\phi$ is one of the irreducible characters corresponding to the statement of the lemma.
	
	Next, assume that $\phi\downharpoonleft _{[n]}$ has none of its constituents listed among the fourteen irreducible characters of $\sym(n)$ of degree less than $5\binom{n}{3}$. Moreover, suppose that $\phi\downharpoonleft _{[n]}$ has at least two constituents. 
	Then, the degree of $\phi\downharpoonleft _{[n]}$ is at least $10\binom{n}{3}>5 \binom{n+1}{3}$ when $n \geq 27$, which is a contradiction.
	
	Finally, we may assume that $\phi\downharpoonleft _{[n]}$ is irreducible and is not one of the fourteen irreducible characters of degree less than $5\binom{n}{3}$ of $\sym(n)$. From the branching rule, the Young diagram corresponding to $\phi$ must be rectangular, that is, $\phi=\chi^{[a^b]}$ for some integers $a,b \geq 1$. Again, using the branching rule, we can deduce that the constituents of $\phi\downharpoonleft _{[n-1]}$ correspond to the two partitions $\lambda'=[a^{b-1},a-2]$ and $\lambda''=[a^{b-2},a-1,a-1]$. Neither $\lambda'$ nor $\lambda''$ is one of the fourteen irreducible characters of degree $5\binom{n-1}{3}$ of $\sym(n-1)$, for $n \geq 27$.
	Therefore, the degree of $\phi$ is at least $10\binom{n-1}{3} > 5\binom{n+1}{3}$ when $n \geq 27$, which is a contradiction.
	This completes the proof.
\end{proof}
\begin{rmk}
	The irreducible characters in Lemma~\ref{lem:irrecharless} all have degree less than $3\binom{n}{3}$. In fact, for $n\geq 19$, the irreducible constituents in Lemma~\ref{lem:irrecharless} are all the irreducible characters of degree less than $3\binom{n}{3}.$\label{rmk}
\end{rmk}

\begin{table} [t]
	\centering
	\begin{longtable}{| c | c | c |}
		\hline
		\textbf{Constituent of $\phi\downharpoonleft _{[n]}$} & \textbf{$\phi$} & \textbf{Degree of $\phi\downharpoonleft _{[n]}$}  \endhead
		\hline
		$[n]$&$[n+1],[n,1]$ &$1$\\
		\hline
		$[1^n]$&$[2,1^{n-1}],[1^{n+1}]$ &$1$\\
		\hline
		$[n-1,1]$&$[n,1],[n-1,2],[n-1,1^2]$ &$n-1$\\
		\hline
		$[2,1^{n-2}]$&$[2,1^{n-1}],[2^2,1^{n-3}],[3,1^{n-2}]$ &$n-1$\\
		\hline
		$[n-2,2]$&$[n-1,2],[n-2,3],[n-2,2,1]$ &$\frac{n(n-3)}{2}$\\
		\hline
		$[2^2,1^{n-4}]$&$[2^2,1^{n-3}],[2^3,1^{n-5}],[3,2,1^{n-4}]$&$\frac{n(n-3)}{2}$\\
		\hline
		$[n-2,1^2]$&$[n-1,1^2],[n-2,2,1],[n-2,1^3]$ & $\frac{(n-1)(n-2)}{2}$\\
		\hline
		$[3,1^{n-3}]$&$[3,1^{n-2}],[3,2,1^{n-4}],[4,1^{n-3}]$ & $\frac{(n-1)(n-2)}{2}$\\
		\hline
		$[n-3,3]$&$[n-2,3],[n-3,4],[n-3,3,1]$ & $\frac{n(n-1)(n-5)}{6}$\\
		\hline
		$[2^3,1^{n-6}]$&$[2^3,1^{n-5}],[2^4,1^{n-7}],[3,2^2,1^{n-6}]$ & $\frac{n(n-1)(n-5)}{6}$\\
		\hline
		$[n-3,1^3]$&$[n-2,1^3],[n-3,2,1^2],[n-3,1^4]$ & $\frac{(n-1)(n-2)(n-3)}{6}$\\
		\hline
		$[4,1^{n-4}]$&$[4,1^{n-3}],[4,2,1^{n-5}],[5,1^{n-4}]$ & $\frac{(n-1)(n-2)(n-3)}{6}$\\
		\hline
		$[n-3,2,1]$&$[n-2,2,1],[n-3,3,1],[n-3,2^2],[n-3,2,1^2]$& $\frac{n(n-2)(n-4)}{3}$\\
		\hline
		$[3,2,1^{n-5}]$ &$[4,2,1^{n-5}],[3^2,1^{n-5}],[3,2^2,1^{n-6}],[3,2,1^{n-4}]$ & $\frac{n(n-2)(n-4)}{3}$\\
		\hline
		\caption{\label{constcahr} Shape of $\phi$ if $\phi\downharpoonleft _{[n]}$ has a constituent of degree less than $5\binom{n}{3}$.}
	\end{longtable}
\end{table}
\newpage
\begin{table}[H] 
	\begin{longtable}{| c | c |}
		\hline
		\textbf{Representation} & \textbf{Degree}  \endhead
		\hline
		$[n-3,4]$ & $\frac{(n+1)n(n-1)(n-6)}{24}$ \\
		\hline
		$[n-3,3,1]$ & $\frac{(n+1)n(n-2)(n-5)}{8}$ \\
		\hline
		$[2^4,1^{n-7}]$ & $\frac{(n+1)n(n-1)(n-6)}{24}$ \\
		\hline
		$[3,2^2,1^{n-6}]$ & $\frac{(n+1)n(n-2)(n-5)}{8}$ \\
		\hline
		$[n-3,2,1^2]$ & $\frac{(n+1)(n-1)(n-2)(n-4)}{8}$ \\
		\hline
		$[n-3,1^4] $ & $\frac{n(n-1)(n-2)(n-3)}{24}$ \\
		\hline
		$[4,2,1^{n-5}]$ & $\frac{(n+1)(n-1)(n-2)(n-4)}{8}$ \\
		\hline
		$[5,1^{n-4}]$ & $\frac{n(n-1)(n-2)(n-3)}{24}$ \\
		\hline
		$[n-3,2^2]$ & $\frac{(n+1)n(n-3)(n-4)}{12}$ \\
		\hline
		$[3^2,1^{n-5}]$ & $\frac{(n+1)n(n-3)(n-4)}{12}$ \\
		\hline
		\caption{\label{degreerepgre} Degree of the representations from Table \ref{constcahr} that are larger than $5\binom{n+1}{3}$.}
	\end{longtable}
\end{table} 
\subsection{Uniqueness of rim hooks}

We prove a result on the uniqueness of rim hooks that are relatively long.

\begin{lem}
	Suppose  $n\geq 1$ and $1\leq a \leq n$. Let $\lambda = [\lambda_1,\lambda_2,\ldots,\lambda_q]\vdash n$ and $\mu = [\mu_1,\mu_2,\ldots,\mu_t] \vdash a$ be a Young diagram obtained by removing a rim hook of length $n-a$ of $\lambda$.
	
	\begin{enumerate}[(1)]
		\item If $\tau$ is a rim hook of $\lambda$ that does not meet either $(1,.)$-cells or $(.,1)$-cells, then the length of $\tau$ is at most $a$.
		\item If $\tau$ is a rim hook of length $\ell \geq a+1$ of $\lambda$, then $\tau$ is either the unique rim hook of length $\ell$ starting at $(q,1)$ or ending at $(1,\lambda_1)$.
		\item If $\ell$ is a positive integer such that $2\ell - n \geq a+1$, then there is exactly one rim hook of length $\ell$ in $\lambda$.
		In particular, if $3a+1\leq n$, then there is a unique rim hook of length $n-a$ in $\lambda$.
	\end{enumerate}\label{lem:uniqueness}
\end{lem}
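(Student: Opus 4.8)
The plan is to deduce all three parts from one numerical inequality together with the classical correspondence between rim hooks and hook lengths. Write $q$ for the number of parts of $\lambda$, let $\lambda'_j$ be the length of the $j$-th column, and set $R:=\lambda_1+q-1$; this is the hook length of the corner cell $(1,1)$ and, equivalently, the length of the longest rim hook of $\lambda$, namely its whole outer rim. Let $\widehat{\lambda}$ be the diagram obtained from $\lambda$ by deleting the first row and the first column, so that its cells are exactly the $(i,j)$-cells of $\lambda$ with $i\ge 2$ and $j\ge 2$, and $|\widehat{\lambda}|=n-\lambda_1-q+1=n-R$. The first thing I would record is the inequality $|\widehat{\lambda}|\le a$: since $\mu$ is obtained by stripping a rim hook of length $n-a$, such a rim hook exists, so $n-a\le R$, i.e.\ $n-R\le a$. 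This single estimate drives everything. In particular part (1) is then immediate: if a rim hook $\tau$ meets neither a $(1,.)$-cell nor a $(.,1)$-cell, every cell of $\tau$ has both coordinates $\ge 2$, so $\tau\subseteq\widehat{\lambda}$ and $|\tau|\le|\widehat{\lambda}|\le a$.

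For part (2) I would invoke the standard bijection between rim hooks of length $\ell$ in $\lambda$ and cells of hook length $\ell$: the rim hook attached to the cell $(i,j)$ has foot (bottom-left end) $(\lambda'_j,j)$ and hand (top-right end) $(i,\lambda_i)$. If its defining cell has $i\ge2$ and $j\ge2$, then the whole rim hook lies in rows $\ge i\ge2$ and columns $\ge j\ge2$, hence by (1) has length $\le a$. Contrapositively, a rim hook of length $\ell\ge a+1$ must come from a cell with $i=1$ or $j=1$, i.e.\ it ends at $(1,\lambda_1)$ or starts at $(q,1)$. Uniqueness is then a monotonicity remark: the first-column hook lengths $h(i,1)=\lambda_i+q-i$ strictly decrease in $i$, and the first-row hook lengths $h(1,j)=\lambda_1+\lambda'_j-j$ strictly decrease in $j$, so at most one cell of column $1$ and at most one cell of row $1$ has hook length $\ell$; these yield at most one rim hook starting at $(q,1)$ and at most one ending at $(1,\lambda_1)$.

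For part (3) I would argue uniqueness by contradiction and read off existence in the distinguished case. First, $2\ell-n\ge a+1$ together with $\ell\le n$ forces $\ell\ge a+1$, so (2) applies. Suppose there were two distinct rim hooks $\tau_1,\tau_2$ of length $\ell$. The full rim is the unique rim hook of length $R$, so $\ell<R$, and by (2) I may take $\tau_1$ to start at $(q,1)$ and $\tau_2$ to end at $(1,\lambda_1)$. The defining first-column cell $(i,1)$ of $\tau_1$ satisfies $h(i,1)=\ell<R=h(1,1)$, so $i\ge2$ and $\tau_1$ lies in rows $\ge2$; symmetrically $\tau_2$ lies in columns $\ge2$. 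Hence $\tau_1\cap\tau_2$ avoids both the first row and the first column, giving $|\tau_1\cap\tau_2|\le|\widehat{\lambda}|\le a$ as in (1), while $|\tau_1\cup\tau_2|\le R\le n$ since $\tau_1\cup\tau_2$ lies on the rim. Therefore
\[
2\ell=|\tau_1|+|\tau_2|=|\tau_1\cup\tau_2|+|\tau_1\cap\tau_2|\le R+a\le n+a,
\]
contradicting $2\ell-n\ge a+1$; so there is at most one rim hook of length $\ell$. Existence in the case $\ell=n-a$ (where $2\ell-n=n-2a\ge a+1$ is exactly $3a+1\le n$) is immediate, since by hypothesis $\mu$ is obtained by removing such a rim hook; this gives the ``in particular'' assertion.

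The step I expect to be the main obstacle is the geometric bookkeeping in part (3): one must be certain that once $\tau_1\ne\tau_2$ (equivalently $\ell<R$) the rim hook based in the first column genuinely avoids the first row, and the one based in the first row genuinely avoids the first column, so that their intersection avoids both and part (1) may be applied to it. This is precisely where the strict monotonicity of the corner hook lengths is used. Everything else — parts (1) and (2), and the closing inequality — reduces cleanly to the estimate $|\widehat{\lambda}|\le a$ recorded at the outset.
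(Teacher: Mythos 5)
Your proof is correct, and although it keeps the paper's three-step skeleton (confine to the region off the first row and column, force long rim hooks onto the boundary, then an inclusion--exclusion count), the key estimates are obtained by genuinely different means. For (1) the paper embeds the region $\lambda/[\lambda_1,1^{q-1}]$ into a box with at most $t$ rows and $\mu_1$ columns and bounds a rim hook there by the semi-perimeter $\mu_1+t-1\le a$; you instead bound the \emph{total number of cells} of that region by $n-R\le a$, using only that a rim hook of length $n-a$ exists and that $R=\lambda_1+q-1$ is the maximal rim-hook length. Your bound is stronger, since it applies to arbitrary subsets of the inner region rather than just rim hooks, and this pays off in (3): the paper must bound the portion of $\gamma\cap\gamma'$ avoiding the first row and column, which is a subset of a path rather than a rim hook and so needs an extra word, whereas you show outright that $\tau_1\cap\tau_2$ avoids both the first row and the first column and then simply count cells. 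The price is that your (2) and (3) import the classical bijection between removable rim hooks and cells via hook lengths, together with the strict monotonicity of hook lengths along the first row and column; this is standard and you use it correctly, but the paper's direct path argument avoids it. Finally, like the paper, you establish only ``at most one'' in (3) for general $\ell$, obtaining existence only for $\ell=n-a$ from the hypothesis; being explicit about this is the honest reading, since existence can fail for other admissible $\ell$ and only uniqueness is used later in the paper.
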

\begin{proof}
	(1) Since no cell of $\tau$ is in the first row or the first column of $\lambda$, the rim hook $\tau$ lies in the skew diagram $\lambda / [\lambda_1,1^{q-1}]$. The latter has at most $t$ rows and $\mu_1$ columns. Therefore, the length of $\tau$ is at most $\mu_1 +t-1 \leq a$.
	
	(2) Since the length of $\tau$ is $\ell \geq a+1$, by (1), $\tau$ must contain a cell of coordinate $(1,i_0)$ or $(j_0,1)$. If $\tau$ contains a $(1,i_0)$-cell, then every $(1,i)$-cell, for $i\geq i_0$, is a cell of $\tau$. That is, $\tau$ ends at the $(1,\lambda_1)$-cell. Since no cell can be below or on the right of $\tau$, it is impossible to have distinct rim hooks of length $\ell$ ending at the $(1,\lambda_1)$-cell. Similarly, if $\tau$ contains a $(j_0,1)$-cell, then $\tau$ must start at the $(q,1)$-cell. This also implies that $\tau$ is the unique rim hook of length $\ell$ starting at the $(q,1)$-cell.
	
	(3) Let $\gamma$ and $\gamma^\prime$ be two rim hooks of $\lambda$ that both have length $\ell$. If $\gamma$ and $\gamma^\prime$ do not have common cells, then $2\ell \leq n$, which contradicts the hypothesis. Hence $\gamma$ and $\gamma^\prime$ must have cells in common. By (1), the number of cells of $\gamma\cap \gamma^\prime$ in the skew diagram $\lambda/[\lambda_1,1^{q-1}]$ is at most $a$. However, by the fact that $2\ell - |\gamma \cap \gamma^\prime | = |\gamma| + |\gamma^\prime| - |\gamma \cap \gamma^\prime|\leq n$ and $2\ell -n \geq a+1$, we deduce that $|\gamma \cap \gamma^\prime| \geq a+1$. Hence, $\gamma\cap \gamma^\prime$ must contain a $(1,.)$-cell or a $(.,1)$-cell. It follows from (2) that $\gamma = \gamma^\prime$.
	The last result follows by taking $\ell =n-a$. 	
	This completes the proof.
\end{proof}

\section{The main idea of the proof of Theorem~\ref{thm:main} }\label{sect:proof-small}
The general idea for the proof of Theorem~\ref{thm:main} is to find a spanning subgraph $X$ of $\Gamma_{n,3}$ whose maximum coclique has size $6(n-3)!$. As removal of edges can only increase the size of the maximum cocliques, we will then have $\alpha(\Gamma_{n,3}) \leq \alpha(X) = 6(n-3)!$. 

One way of finding such a spanning subgraph is by assigning weights to the conjugacy classes of $3$-derangements of $\sym(n)$. This gives a spanning subgraph of $\Gamma_{n,3}$ which corresponds to a weighted adjacency matrix $A$.  
If the weighted adjacency matrix $A$ has all of its eigenvalues in $[-1,\binom{n}{3}-1]$ and its minimum and maximum eigenvalues are, respectively, $-1$ and $\binom{n}{3}-1$, then by applying Lemma~\ref{lem:ratio-bound}, we have
\begin{align*}
	\alpha(X) \leq n!\left(1-\frac{\binom{n}{3}-1}{-1}\right)^{-1} = 3!(n-3)!.
\end{align*}

In our proof, we find a weighted adjacency matrix $A$ so that
\begin{enumerate}[(i)]
	\item its maximum eigenvalue, $\binom{n}{3}-1$, is given by the irreducible character $\chi^{[n]}$, \label{max}
	\item its minimum eigenvalue, $-1$, is given by the  irreducible characters $\chi^{[n-1,1]},\ \chi^{[n-2,2]}$, and $\chi^{[n-3,3]}$, \label{min}
	\item any other irreducible character gives an eigenvalue which is in the interval $\left( -1,\binom{n}{3}-1 \right)$. \label{others}
\end{enumerate}

The choice of the irreducible characters $\chi^{[n]},\ \chi^{[n-1,1]},\ \chi^{[n-2,2]}$, and $\chi^{[n-3,3]}$ is related to the decomposition of the permutation character of the group $\sym(n)$ acting on the $3$-subsets of $\{1,2,\ldots,n\}$. Using the Young's rule \cite[Theorem~2.11.2]{sagan2001symmetric}, it is easy to derive that the irreducible constituents of the permutation character of $\sym(n)$ acting on the $3$-subsets of $\{1,2,\ldots,n\}$ are $\chi^{[n]},\ \chi^{[n-1,1]},\ \chi^{[n-2,2]}$, and $\chi^{[n-3,3]}$ (these were also computed in \cite[Lemma~15]{ellis2012setwise}). Therefore, our choice for $A$ is so that the irreducible characters producing the minimum eigenvalue $-1$ are exactly the non-trivial constituents of the permutation character.

For $11\leq n\leq 19$ and $n\in \{21,23,25\}$, we use \verb|Sagemath| \cite{sagemath} to find weightings on the conjugacy classes so that properties (\ref{max}), (\ref{min}), and (\ref{others}) are satisfied. In fact, there are infinitely many weighted adjacency matrices that we could find for these cases. The \verb|Sagemath| code that we used to find the weightings and compute the corresponding eigenvalues are available in \cite{Razafimahatratra2021}. 

The proof for the case $n\geq 27$ odd is given in Section~\ref{sect:odd}. For $n\geq 20$ even, we give the proof in Section~\ref{sect:proof-even}.

The proof of the second statement of Theorem~\ref{thm:main} follows from the ratio bound. If (\ref{max}), (\ref{min}), and (\ref{others}) are satisfied, then the ratio bound holds with equality. By Lemma~\ref{lem:ratio-bound}, the translated characteristic vector $v_\mathcal{F} - \frac{1}{\binom{n}{3}}\mathbf{1}$ of an intersecting family $\mathcal{F}$ of size $6(n-3)!$ is a $(-1)$-eigenvector of the weighted adjacency matrix $A$. Therefore, $v_\mathcal{F} - \frac{1}{\binom{n}{3}}\mathbf{1} \in U_{[n-1,1]} \oplus U_{[n-2,2]} \oplus U_{[n-3,3]}$, where $U_\lambda$ is the eigenspace that corresponds to the  $\lambda$-Specht module of $\sym(n)$ (see Theorem~\ref{lem:eig-norm}). Since the maximum eigenvalue of $A$ is its row sum, by (\ref{max}), we deduce that $\binom{n}{3}-1$ equals the row sum of $A$. Since the eigenspace $U_{[n]}$ is one dimensional, we conclude that $U_{[n]} = \langle \mathbf{1} \rangle$. 
\newpage
\noindent Hence,
\begin{align*}
	v_\mathcal{F} \in U_{[n]} \oplus U_{[n-1,1]} \oplus U_{[n-2,2]} \oplus U_{[n-3,3]}.
\end{align*}
\begin{rmk}
	The result in the first part of Theorem~\ref{thm:main} (i.e., the bound) also holds for $ 3\leq n \leq 10$. The \verb|Sagemath| code containing the necessary computations are also available in \cite{Razafimahatratra2021}.
\end{rmk}

\section{$3$-setwise intersecting when $n\geq 27$ is odd}\label{sect:odd}

Let $\mu$ be a composition of the integer $n$. We denote by $C_\mu$ the conjugacy class of $\sym(n)$ that has cycle type $\mu$, and $A_\mu$ the matrix of $C_\mu$ in the conjugacy class scheme of $\sym(n)$. We consider the weighted adjacency matrix
\begin{align}
	A &= x_1 A_{(n)} +x_2 A_{(n-2,1^2)} +x_3 A_{(n-2,2)} + x_4 A_{(n-5,4,1)} + x_5 A_{(n-1,1)}.\label{eq:odd-adj}
\end{align}
We find the weights $(x_i)_{i=1,2,3,4,5}$ in \eqref{eq:odd-adj}, so that $A$ verifies the properties \eqref{max}, \eqref{min}, and \eqref{others} described in Section~\ref{sect:proof-small}. The values of the irreducible characters $\chi^{[n]}$, $\chi^{[n-1,1]}, \ \chi^{[n-2,2]}$, and $\chi^{[n-3,3]}$ on an element of the conjugacy classes $C_{(n)},$   $C_{(n-2,1^2)},$  $C_{(n-2,2)},\ C_{(n-5,4,1)}$, and $C_{(n-1,1)}$ are given in the following table.
\begin{table}[h!]
	\begin{longtable}{|c|c|c|c|c|c|} \hline
		& $C_{(n)}$   & $C_{(n-2,1^2)}$ & $C_{(n-2,2)}$ & $C_{(n-5,4,1)}$ & $C_{(n-1,1)}$ \\
		Representation  & & & & &\\ \hline
		$\chi^{[n]}$      &  $1 $ &$1$ & $1$  & $1$ &  $1$  \\ \hline
		$\chi^{[n-1,1]}$ & $-1$ & $1$  & $-1$ & $0$ & $0$\\ \hline
		$\chi^{[n-2,2]}$ &$0$ &  $-1$ & $1$ & $-1$ & $-1$ \\ \hline
		$\chi^{[n-3,3]}$ &$0$ &  $-1$ & $-1$ & $0$ & $0$ \\ \hline
		\caption{Values of the constituents of the permutation character on the desired conjugacy classes.}
	\end{longtable}
\end{table}

\noindent Let $C_1:= C_{(n)},\ C_2 := C_{(n-2,1^2)} ,\ C_3 := C_{(n-2,2)},\ C_4 := C_{(n-5,4,1)}$, and $C_5 := C_{(n-1,1)}$. Let $\alpha= \binom{n}{3}-1$ and let $\beta,\gamma$, and $\delta$ be, respectively, the degree of the Specht module corresponding to $\ [n-1,1],\ [n-2,2]$, and $[n-3,3]$. That is, $\beta = (n-1)$, $\gamma = \binom{n}{2}-n$, and $\delta = \binom{n}{3} - \binom{n}{2}$. Finding the weights $(x_i)_{i=1,2,3,4,5}$ in \eqref{eq:odd-adj} for which $A$ satisfies \eqref{max} and \eqref{min} is equivalent to finding the solutions of the system of linear equations

\begin{align}
	\left\{
		\begin{aligned}
				&\omega_1 + \omega_2 + \omega_3 +\omega_4 + \omega_5 = \alpha\\ 
				& -\omega_1 + \omega_2 - \omega_3 = -\beta\\
				&-\omega_2 + \omega_3 -\omega_4 -\omega_5 = - \gamma \\
				& -\omega_2 - \omega_3 = -\delta,
		\end{aligned}
	\right.\label{eq:main-lin}
\end{align}
where $\omega_i = x_i |C_i|$ for $i\in \{1,2,3,4,5\}$.

The system of linear equations \eqref{eq:main-lin} has infinitely many solutions. A general solution to it is
\begin{align}
	\begin{split}
	\omega_1(s,t) &= -s- t + (\beta +\gamma)\\
	\omega_2(s,t) &= -\frac{1}{2}s-\frac{1}{2} t + \frac{1}{2} (\alpha -\beta)\\
	\omega_3(s,t) &= \frac{1}{2}s + \frac{1}{2}t + \frac{1}{2} (\alpha -\beta) -\gamma\\
	\omega_4(s,t) &= s\\
	\omega_5(s,t) &= t.
	\end{split}\label{eq:solution_odd}
\end{align}

Let $A$ be the weighted adjacency matrix defined in \eqref{eq:odd-adj} with the weights in \eqref{eq:solution_odd}. We prove that the eigenvalues of $A$ are in the interval $[-1,\binom{n}{3}-1]$, for appropriate values of $s$ and $t$. 
\subsection{Irreducible characters  on $C_1,C_2,C_3,C_4,$ and $C_5$}
In this subsection, we give the values of the irreducible characters of $\sym(n)$ on the conjugacy classes $C_1,\ C_2,\ C_3,\ C_4$, and $C_5$. We will see in particular that these values are in the set $\{-1,0,1\}$.
\begin{lem}
	If $\chi \in \operatorname{Irr}_n$, then $|\chi(x)| \in \{0,1\}$ for any $x\in C_1 \cup C_2\cup C_3\cup C_4 \cup C_5$.\label{lem:main_val}
\end{lem}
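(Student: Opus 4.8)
The plan is to evaluate each $\chi^\lambda_\rho$ by a single application of the Murnaghan--Nakayama rule (Lemma~\ref{lem:MurnNakRule}) after peeling off the \emph{longest} cycle of the relevant cycle type, using the uniqueness of long rim hooks (Lemma~\ref{lem:uniqueness}) to collapse the Murnaghan--Nakayama sum to at most one surviving term. Since each $\chi^\lambda_\rho$ is a class function, I am free to order each composition so that its longest cycle appears first. The five classes then have leading cycle length $\ell \in \{n,\,n-1,\,n-2,\,n-2,\,n-5\}$ with residual size $a = n-\ell \in \{0,1,2,2,5\}$. In every case $3a+1 \le n$ for $n \ge 27$ (the tightest constraint, from $C_4$, only requires $n \ge 16$), so Lemma~\ref{lem:uniqueness}(3) guarantees that $\lambda$ has at most one rim hook of length $\ell$.

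With uniqueness in hand, the first step is routine for each class: if $\lambda$ has no rim hook $\zeta_0$ of length $\ell$, then $RH_{\ell}(\lambda) = \varnothing$ and $\chi^\lambda_\rho = 0$; otherwise the sum in Lemma~\ref{lem:MurnNakRule} has a single term, giving $\chi^\lambda_\rho = (-1)^{\ell\ell(\zeta_0)}\,\chi^{\lambda\setminus\zeta_0}_{\rho\setminus\ell}$. Thus $|\chi^\lambda_\rho|$ equals either $0$ or the absolute value of a single character value of $\sym(a)$ evaluated on the residual cycle type, and it suffices to bound those residual values by $1$.

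The residual computations are then elementary. For $C_1$ we have $a=0$ and $\lambda\setminus\zeta_0=\varnothing$ with value $1$. For $C_5$, $a=1$ and the residual is $\chi^{[1]}_{(1)}=1$. For $C_2$ the residual class is the identity of $\sym(2)$, so both $\nu\vdash 2$ give $\chi^{\nu}_{(1,1)}=1$; for $C_3$ the residual is a transposition, giving $\chi^{[2]}_{(2)}=1$ and $\chi^{[1^2]}_{(2)}=-1$. The only class needing a second pass is $C_4$, where $a=5$ and the residual value is $\chi^{\nu}_{(4,1)}$ for $\nu\vdash 5$; here I iterate the same argument, peeling the $4$-cycle and invoking Lemma~\ref{lem:uniqueness}(3) inside $\sym(5)$ (with $3\cdot 1+1 = 4 \le 5$) to get a unique rim hook of length $4$, leaving $\chi^{[1]}_{(1)}=1$. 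In all cases the surviving residual lies in $\{-1,0,1\}$, so $|\chi(x)|\in\{0,1\}$ for every $x\in C_1\cup\dots\cup C_5$.

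I expect the only genuine care points, rather than obstacles, to be bookkeeping: checking that the hypothesis $3a+1\le n$ of Lemma~\ref{lem:uniqueness}(3) holds at each stage, correctly handling the empty-sum (nonexistence) case so that the bound $|\chi^\lambda_\rho|=0$ is recorded, and confirming that for $n\ge 27$ the longest part really is unique and comes first (e.g.\ $n-5 > 4$ in $C_4$). The class $C_4$ is the only one requiring a second Murnaghan--Nakayama reduction, and treating it by re-applying the uniqueness lemma inside $\sym(5)$ (instead of quoting the $\sym(5)$ character table) keeps the argument self-contained.
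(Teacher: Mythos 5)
Your proof is correct and follows essentially the same route as the paper's: peel off the longest cycle via the Murnaghan--Nakayama rule, use Lemma~\ref{lem:uniqueness} to collapse the sum to at most one term, and bound the residual character value of $\sym(a)$ by $1$ for $a\in\{0,1,2,5\}$. The only (harmless) difference is that for $C_4$ you re-apply the uniqueness lemma inside $\sym(5)$ to evaluate the residual class $(4,1)$, whereas the paper simply records $\max_{\mu\vdash 5}\lvert\chi^\mu_{(4,1)}\rvert=1$.
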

\begin{proof}
	The proof is an immediate consequence of Lemma~\ref{lem:uniqueness} and the Murnaghan-Nakayama rule. Let $\lambda \vdash n$. For $n\geq 16$, Lemma~\ref{lem:uniqueness} implies that there exists at most one rim hook of length $n-a$ in the Young diagram corresponding to $\lambda \vdash n$, for $a\in \{0,1,2,5\}$. It is easy to see that $\chi^\lambda_{(n)} \in \{-1,0,1\}$ as there exists at most one rim hook of length $n$. For the other conjugacy classes, by the Murnaghan-Nakayama rule, we have
	\begin{align*}
		|\chi^\lambda_{(n-1,1)}| & \leq \max_{\mu \vdash 1} |\chi^\mu_{(1)}| = 1,\\
		|\chi^\lambda_{(n-2,1^2)}| & \leq \max_{\mu \vdash 2} |\chi^\mu_{(1^2)}| = 1\\
		|\chi^\lambda_{(n-2,2)}| & \leq \max_{\mu \vdash 2} |\chi^\mu_{(2)}| = 1,\\
		|\chi^\lambda_{(n-5,4,1)}| & \leq \max_{\mu \vdash 5} | \chi^\mu_{(4,1)}| = 1.
	\end{align*}
	This completes the proof since the symmetric group has integral characters.  
\end{proof}

\subsection{Eigenvalues of small degree characters}\label{subs:small_char_odd}
In this subsection, we compute the eigenvalues belonging to characters of degree less than $5\binom{n}{3}$ in terms of $s$ and $t$ (see \eqref{eq:solution_odd}). Then, we define a polytope of $\mathbb{R}^2$ where these eigenvalues are in the interval $[-1,\binom{n}{3}-1]$, for every $(t,s)$ in the polytope.

By Lemma~\ref{lem:evalue-wadj}, the eigenvalues of $A$ are of the form
\begin{align}
	\xi_{\chi}(s,t) &= \frac{1}{\chi(id)} \sum_{i=1}^5 \omega_i(s,t) \chi(C_i)\label{eq:general_eigenvalues_odd}
\end{align}
for $\chi \in \operatorname{Irr}_n$.
The irreducible characters of degree less than $5\binom{n}{3}$ of $\sym(n)$ are given in Lemma~\ref{lem:irrecharless}. The eigenvalue that corresponds to $\chi^{[n]}$ is $\binom{n}{3}-1$ and the eigenvalue that corresponds to $\chi^{[n-1,1]},\ \chi^{[n-2,2]}$, and $\chi^{[n-3,3]}$ is $-1$. We use Table~\ref{table:odd} in Appendix~\ref{appendix} to compute the remaining eigenvalues corresponding to small degree characters. The eigenvalues that correspond to the other irreducible characters of degree less than $5\binom{n}{3}$ are
\begin{align}
	\begin{split} 
	\xi_{\chi^{[1^n]}} &= (-1)^{n-1} \left( -s -3t + \beta + 2 \gamma \right),\\
	\xi_{\chi^{[2,1^{n-2}]}} &= (-1)^n  + \frac{(-1)^{n-1}}{n-1} \left( s + t + \alpha -\beta - 2\gamma \right),\\
	\xi_{\chi^{[2^2,1^{n-4}]}} &= \frac{(-1)^n }{\binom{n}{2}-n} (\delta + s-t),\\
	\xi_{\chi^{[n-2,1^2]}} &= \frac{1}{\binom{n-1}{2}} \left( -s-t +\beta +\gamma \right),\\
	\xi_{\chi^{[3,1^{n-3}]}} &= \frac{(-1)^{n-1}}{\binom{n-1}{2}} \left( -s -t +\beta + \gamma \right),\\
	\xi_{\chi^{[2^3,1^{n-6}]}} &= (-1)^n + \frac{(-1)^{n-1}}{\binom{n}{3}-\binom{n}{2}} \left( s + t + \alpha - \beta -2\gamma \right),\\
	\xi_{\chi^{[n-3,1^3]}} &= \frac{1}{\binom{n-1}{3}} \left( s +t -(\beta +\gamma) \right),\\
	\xi_{\chi^{[4,1^{n-4}]}} &= \frac{(-1)^{n-1}}{\binom{n-1}{3}} \left( s + t -(\beta +\gamma) \right),\\
	\xi_{\chi^{[n-3,2,1]}} &= \frac{3}{n(n-2)(n-4)} (s+t),\\
	\xi_{\chi^{[3,2,1^{n-5}]}} &= \frac{3(-1)^{n-1}}{n(n-2)(n-4)} (s-t).
	\end{split} \label{eq:evalues-odd}
\end{align}
Let $\mathcal{P}$ be the polytope of $\mathbb{R}^2$ defined by the halfspaces
\begin{align}
\begin{split} 
	\left\{ 
			\begin{aligned}
				& 3x + y  < \beta +\gamma, \\
				&-\frac{n(n-2)(n-4)}{3} < y-x \leq \beta + \gamma - \binom{n-1}{3}, \\
				&\beta + \gamma - \binom{n-1}{3} < x + y < \beta + \gamma .
			\end{aligned}
	\right.
\end{split} 	\label{eq:polytope-odd}
\end{align}
The third relation of \eqref{eq:polytope-odd} is the set of all points between the parallel lines $(L_1):\ y = -x + \beta + \gamma - \binom{n-1}{3}$ and $(L_2) :\ y = -x + \beta +\gamma $. The second equation of \eqref{eq:polytope-odd} is the set of points between the parallel lines $(L_3):\ y=x -\frac{n(n-2)(n-4)}{3}$ and $(L_4):\ y = x + \beta + \gamma -\binom{n-1}{3}$. Further, $(L_1)$ and $(L_2)$ are both perpendicular to $(L_3)$ and $(L_4)$. The first equation in \eqref{eq:polytope-odd} is the halfspace below the line $(L_5) :\ y = -3x + \beta + \gamma$. The intersection $I$ of $(L_4)$ and $(L_5)$ has coordinate 
$$\left(\frac{1}{4}\binom{n-1}{3},\beta + \gamma  - \frac{3}{4} \binom{n-1}{3}\right).$$

\noindent We let the reader verify that $\mathcal{P}$ has four vertices. Moreover, the vertex $I$ has the maximum $y$-coordinate among the 4 vertices of $\mathcal{P}$, and the vertex of coordinate $\left( 0,\beta + \gamma - \binom{n-1}{3} \right)$ has the minimum $x$-coordinate. Therefore, $\mathcal{P} $  is non-empty and contained in the quadrant $ \left\{ (x,y) \in \mathbb{R}^2 : x>0,\ y<0 \right\}$.

\raggedbottom  It is easy to see that the eigenvalues in \eqref{eq:evalues-odd} are in the interval $\left(-1,\binom{n}{3}-1\right)$ for any $(t,s) \in \mathcal{P}$.
\subsection{Eigenvalues of large degree characters }\label{subs:large_char_odd}
We compute the eigenvalues of irreducible characters of degree larger than $5\binom{n}{3}$. 

For any $(t,s)\in \mathcal{P}$, we have the following
\begin{align*}
	\omega_1(s,t) &= -s -t +\beta +\gamma >0, \\
	\omega_2(s,t) &= \frac{1}{2} \left( -s -t + \alpha -\beta \right) > \frac{1}{2} \left( \alpha - 2\beta - \gamma - \binom{n-1}{2} \right) = \frac{1}{2} \left( \binom{n}{3} -2\binom{n}{2} \right) \\
	&>0 \ (\mbox{ for } n> 8),\\
	\omega_3(s,t) &= \frac{1}{2} s + \frac{1}{2} t + \frac{1}{2} ( \alpha + \beta -2\gamma)
	> \frac{1}{2} \left( \beta + \gamma - \binom{n-1}{3} +  \alpha - \beta - 2\gamma  \right) = 0.
\end{align*}
As $\mathcal{P} \subset \left\{ (x,y) \in \mathbb{R}^2 : x>0,\ y<0 \right\}$, we have $\omega_4(s,t) <0$ and $\omega_5(s,t)>0$. In conclusion, for $(t,s)\in \mathcal{P}$, the weights in \eqref{eq:solution_odd} are positive except $\omega_4(s,t)$.

By the general form of an eigenvalue of $A$ (see \eqref{eq:general_eigenvalues_odd}) and by Lemma~\ref{lem:main_val}, if $\chi \in \operatorname{Irr}_n$ with $\chi(id) > 5\binom{n}{3}$, then for any $(t,s) \in \mathcal{P}$
\begin{align*}
	|\xi_{\chi}(s,t)| &\leq \frac{1}{\chi(id)} \sum _{i=1}^5 |\omega_i(s,t)| | \chi(C_i)|\\
	&\leq  \frac{1}{5\binom{n}{3}} \left(\sum _{i\in\{1,2,3,5\}} \omega_i(s,t) - \omega_4(s,t)\right), \\ &=\frac{\binom{n}{3}-1-2s}{5\binom{n}{3}} \\
	&< \frac{\binom{n}{3}+\frac{2n(n-2)(n-4)}{3}}{5\binom{n}{3}} \ \ \ \ \ (\mbox{see the definition of }\mathcal{P}\ \eqref{eq:polytope-odd}) \\
	&= \frac{\binom{n}{3}\left(1+4\frac{n-4}{n-1}\right)}{5\binom{n}{3}}< \frac{5\binom{n}{3}}{5\binom{n}{3}}
	=1.
\end{align*}
We formulate this result as the following lemma.
\begin{lem}
	If $\chi \in \operatorname{Irr}_n$ with $\chi(id) > 5\binom{n}{3}$, then the eigenvalue of $A$ corresponding to $\chi$ is strictly less than $1$ in absolute value.
\end{lem}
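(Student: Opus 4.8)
The plan is to bound each eigenvalue $\xi_\chi(s,t)$ uniformly for characters of large degree by exploiting two facts already established in the excerpt: first, Lemma~\ref{lem:main_val} guarantees that the character values $|\chi(C_i)|$ are all at most $1$ on the five chosen conjugacy classes; and second, for $(t,s)\in\mathcal{P}$ we have pinned down the signs of the weights, namely $\omega_1,\omega_2,\omega_3,\omega_5>0$ and $\omega_4<0$. The immediate consequence is the triangle-inequality estimate
\begin{align*}
	|\xi_\chi(s,t)| \leq \frac{1}{\chi(\mathrm{id})}\sum_{i=1}^5 |\omega_i(s,t)|\,|\chi(C_i)| \leq \frac{1}{\chi(\mathrm{id})}\sum_{i=1}^5 |\omega_i(s,t)|.
\end{align*}
The hypothesis $\chi(\mathrm{id})>5\binom{n}{3}$ is precisely what lets us replace the denominator by $5\binom{n}{3}$, so the whole problem reduces to showing that $\sum_{i=1}^5 |\omega_i(s,t)|$ stays below $5\binom{n}{3}$ on the polytope.

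Next I would rewrite $\sum_i|\omega_i|$ using the sign information. Since only $\omega_4$ is negative, we have $\sum_i|\omega_i| = \big(\sum_{i\neq 4}\omega_i\big)-\omega_4$. The first four defining relations of the linear system \eqref{eq:main-lin} — in particular the top equation $\sum_i\omega_i=\alpha=\binom{n}{3}-1$ — give $\sum_{i\neq 4}\omega_i = \alpha-\omega_4$, hence $\sum_i|\omega_i| = \alpha - 2\omega_4 = \binom{n}{3}-1-2s$, which is exactly the expression appearing in the displayed computation. This is the key algebraic simplification: the linear constraints collapse the sum of absolute values into a clean affine function of $s$ alone.

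The remaining task is to bound $\binom{n}{3}-1-2s$ from above on $\mathcal{P}$, which amounts to bounding $-s$ from above, i.e.\ bounding $s$ from below. Here I would invoke the geometry of the polytope $\mathcal{P}$: the second relation of \eqref{eq:polytope-odd} forces $y-x>-\frac{n(n-2)(n-4)}{3}$, and combined with the confinement $\mathcal{P}\subset\{x>0,\,y<0\}$ this yields the lower bound $s>-\frac{n(n-2)(n-4)}{3}$ (recall the identification $(x,y)=(t,s)$). Substituting gives $\binom{n}{3}-1-2s < \binom{n}{3}+\frac{2n(n-2)(n-4)}{3}$. Writing $\frac{n(n-2)(n-4)}{3}$ in terms of $\binom{n}{3}=\frac{n(n-1)(n-2)}{6}$ produces the factor $4\frac{n-4}{n-1}<4$, so the numerator is strictly less than $\binom{n}{3}(1+4)=5\binom{n}{3}$, and dividing by $5\binom{n}{3}$ yields $|\xi_\chi(s,t)|<1$, as required.

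I expect the only genuinely delicate point to be the sign and magnitude bookkeeping on $\mathcal{P}$ — specifically confirming that $\omega_4<0$ (equivalently $s<0$, via the containment in the fourth quadrant) so that the $-\omega_4 = -s$ contributes a positive term whose size is controlled by the $y-x$ lower bound rather than by something unbounded. Since all of these sign facts and the polytope containment $\mathcal{P}\subset\{x>0,\,y<0\}$ were already verified in Section~\ref{subs:large_char_odd} and Section~\ref{subs:small_char_odd}, the argument is essentially a matter of assembling the triangle inequality, the linear-system identity $\sum_i\omega_i=\binom{n}{3}-1$, and the single inequality $s>-\frac{n(n-2)(n-4)}{3}$ coming from the polytope, with the final arithmetic reducing to the elementary estimate $\frac{n-4}{n-1}<1$.
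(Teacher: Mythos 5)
Your proposal is correct and follows essentially the same route as the paper: the triangle inequality with $|\chi(C_i)|\leq 1$, the sign analysis reducing $\sum_i|\omega_i(s,t)|$ to $\binom{n}{3}-1-2s$ via the row-sum equation, and the polytope bound $s>-\frac{n(n-2)(n-4)}{3}$ giving the final estimate $1+4\frac{n-4}{n-1}<5$. No gaps.
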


The eigenvalues of the weighted adjacency matrix $A$ are therefore in the interval $\left[-1,\binom{n}{3}-1\right]$. Combining Subsection~\ref{subs:small_char_odd} and Subsection~\ref{subs:large_char_odd}, we conclude that the only irreducible characters of $\sym(n)$ giving the eigenvalue $-1$ are $\chi^{[n-1,1]},\ \chi^{[n-2,2]}$, and $\chi^{[n-3,3]}$. This completes the proof when $n\geq 27$ is odd.
\raggedbottom 
\section{$3$-setwise intersecting when $n\geq 20$ is even}\label{sect:proof-even}
A similar approach to the one in the previous section is used to prove Theorem~\ref{thm:main}, for $n\geq 20$ even. We consider the following weighted adjacency matrix 
\begin{align}
	A &= x_1 A_{(n-5,5)} + x_2 A_{(n-6,2^3)} + x_3 A_{(n-6,4,1^2)} + x_4 A_{(n-6,4,2)} + x_5 A_{(n-6,5,1)}.\label{eq:weighted_adjacency_matrix_even}
\end{align}
We find the weights $(x_i)_{i=1,2,3,4,5} \subset \mathbb{R}$ such that the eigenvalues of $A$ are in the interval $[-1,\binom{n}{3}-1]$. 
The values of the irreducible characters of $\sym(n)$ corresponding to $\chi^{[n]},\ \chi^{[n-1,1]},\ \chi^{[n-2,2]}$, and $\chi^{[n-3,3]}$ on the chosen conjugacy classes are given in the following table.

\begin{table}[H]
	\begin{longtable}{|c|c|c|c|c|c|} \hline
		& $C_{(n-5,5)}$  & $C_{(n-6,2^3)}$ & $C_{(n-6,4,1^2)}$ & $C_{(n-6,4,2)}$ & $C_{(n-6,5,1)}$ \\
		Representation  & & & & &\\ \hline
		$\chi^{[n]}$      &  $1 $ & $1$ & $1$  & $1$ &  $1$ \\ \hline
		$\chi^{[n-1,1]}$ & $-1$ & $-1$  & $1$ & $-1$ & $0$ \\ \hline
		$\chi^{[n-2,2]}$ &$0$ &  $3$ & $-1$ & $1$ & $-1$ \\ \hline
		$\chi^{[n-3,3]}$ &$0$ &  $-3$ & $-1$ & $-1$ & $0$ \\ \hline
		\caption{Values of the constituents of the permutation character on the chosen conjugacy classes.}
	\end{longtable}
\end{table}

We define $C_1 := C_{(n-5,5)},\ C_2 := C_{(n-6,2^3)},\ C_3:= C_{(n-6,4,1^2)},\ C_4 := C_{(n-6,4,2)}$, and $C_5 := C_{(n-6,5,1)}$. Let $\alpha = \binom{n}{3}-1$ and let $\beta,\gamma, \mbox{ and }\delta$ be, respectively, the degree of the irreducible characters $\chi^{[n-1,1]},\ \chi^{[n-2,2]}$, and $ \chi^{[n-3,3]}$. 
For $i\in \{1,2,3,4,5\}$, let $\omega_i = |C_i| x_i$. Similar to the previous section, we solve the system of linear equations 
\begin{align}
	\left\{
	\begin{aligned}
		\omega_1 + \omega_2 + \omega_3 + \omega_4 +\omega_5 &= \alpha,\\
		-\omega_1  - \omega_2 + \omega_3 - \omega_4 &= -\beta,\\
		3\omega_2 -\omega_3 + \omega_4 -\omega_5 &= -\delta,\\
		-3\omega_2 -\omega_3 -\omega_4 &= -\gamma.
	\end{aligned}
	\right.\label{eq:evalue-eq-even}
\end{align}
A solution to \eqref{eq:evalue-eq-even} is a function of two parameters, $t$ and $s$, and is of the form
\begin{align}
	\begin{split}
		\omega_1(s,t) &= -\frac{2}{3}t - \frac{2}{3}s  + \frac{1}{3}\alpha +  \frac{2}{3}\beta + \frac{\gamma}{3},\\
		\omega_2(s,t) &= \frac{1}{6}t - \frac{1}{3}s  + \frac{1}{6}(\alpha - \beta) - \frac{\gamma}{3},\\
		\omega_3(s,t) &= -\frac{1}{2}t + \frac{1}{2}(\alpha - \beta),\\
		\omega_4(s,t) &= s,\\
		\omega_5(s,t) &= t.\label{eq:weights-even}
	\end{split}
\end{align}
By Lemma~\ref{lem:evalue-wadj}, the eigenvalue of $A$ corresponding to the irreducible character $\chi \in \operatorname{Irr}_n$ is
\begin{align}
	\xi_{\chi} (s,t) &= \frac{1}{\chi(id)}\sum_{i=1}^5 \omega_i(s,t) \chi(C_i).\label{eq:general-eig-even}
\end{align}

\subsection{Irreducible characters  on $C_1,C_2,C_3,C_4$, and $C_5$}\raggedbottom
In this subsection, we give the values of the irreducible characters of $\sym(n)$ on the conjugacy classes $C_1,\ C_2,\ C_3,\ C_4$, and $C_5$. Contrary to the conjugacy classes in Section~\ref{sect:odd}, the values of the irreducible characters on the chosen conjugacy classes are in the set $\{0,\pm 1,\pm 2,\pm 3\}$.
\begin{lem}
	If $\chi \in \operatorname{Irr}_n$, then $|\chi(x)| \in \{0,1,2,3\}$ for any $x\in C_1 \cup C_2\cup C_3\cup C_4\cup C_5$.\label{lem:main_val-even}
\end{lem}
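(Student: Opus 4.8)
The plan is to follow the proof of Lemma~\ref{lem:main_val}: reduce each character value to one on a small symmetric group by a single application of the Murnaghan--Nakayama rule, and then read the bound off the character tables of $\sym(5)$ and $\sym(6)$.

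First I would note that each of the five cycle types has a unique dominant part, namely $n-5$ for $C_1$ and $n-6$ for $C_2,C_3,C_4,C_5$. Since character values do not depend on the order of the parts, I reorder each cycle type to place this dominant part first and apply Lemma~\ref{lem:uniqueness}(3). As $n\geq 20$ gives both $3\cdot 5+1 = 16\leq n$ and $3\cdot 6+1 = 19\leq n$, any $\lambda\vdash n$ has exactly one rim hook $\zeta$ of length $n-5$ (for $C_1$) and exactly one of length $n-6$ (for $C_2,\dots,C_5$). Hence the first step of the Murnaghan--Nakayama rule (Lemma~\ref{lem:MurnNakRule}) collapses to a single summand, and for $x\in C_i$ one gets
\[
	\chi^\lambda(x) = (-1)^{\ell\ell(\zeta)}\,\chi^{\lambda\setminus\zeta}_{\rho_i},
\]
where $\mu := \lambda\setminus\zeta$ is a partition of $5$ (for $C_1$) or $6$ (for $C_2,\dots,C_5$) and $\rho_i$ is the residual cycle type: $\rho_1 = (5)$, $\rho_2 = (2^3)$, $\rho_3 = (4,1^2)$, $\rho_4 = (4,2)$, $\rho_5 = (5,1)$.

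It then suffices to bound $|\chi^\mu_{\rho_i}|$ over all $\mu\vdash 5$ or $\mu\vdash 6$, which I would extract from the character tables of $\sym(5)$ and $\sym(6)$ (a short secondary Murnaghan--Nakayama computation suffices, and the sign twist $\chi^{\mu'} = \operatorname{sgn}\cdot\chi^{\mu}$ halves the work by pairing conjugate partitions). Every residual type except $(2^3)$ yields values in $\{0,\pm1\}$, giving
\[
	\max_{\mu\vdash 5}|\chi^\mu_{(5)}| = \max_{\mu\vdash 6}|\chi^\mu_{(4,1^2)}| = \max_{\mu\vdash 6}|\chi^\mu_{(4,2)}| = \max_{\mu\vdash 6}|\chi^\mu_{(5,1)}| = 1,
\]
whereas the exceptional class satisfies $\max_{\mu\vdash 6}|\chi^\mu_{(2^3)}| = 3$, with value $\pm 3$ attained by $\chi^{[4,2]}$ and $\chi^{[3^2]}$ and value $\pm2$ by $\chi^{[4,1^2]}$. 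Because the symmetric group is integer-valued, every $\chi^\lambda(x)$ is then an integer of absolute value at most $3$, giving $|\chi(x)|\in\{0,1,2,3\}$.

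The main obstacle, and exactly the point flagged in the introduction as the source of extra difficulty, is the class $C_2$ of type $(n-6,2^3)$. In Section~\ref{sect:odd} every residual cycle type was essentially a single cycle, so its secondary character value lay in $\{0,\pm1\}$ for free; here the residual type $(2^3)$ is a product of three transpositions, on which $\sym(6)$ genuinely carries characters of absolute value $2$ and $3$. The bound therefore cannot be obtained by a one-line single-cycle argument and requires an honest look at the $\sym(6)$ table on $(2^3)$. As a bonus, the computations $\chi^{[4,2]}_{(2^3)} = 3$ and $\chi^{[3^2]}_{(2^3)} = -3$ are precisely what produce the entries $3$ and $-3$ for $\chi^{[n-2,2]}$ and $\chi^{[n-3,3]}$ on $C_2$ in the table preceding the lemma, so verifying them also checks the consistency of the weighting system \eqref{eq:weights-even}.
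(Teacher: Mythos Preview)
Your proof is correct and follows exactly the paper's approach: invoke Lemma~\ref{lem:uniqueness}(3) to see that $\lambda$ has at most one rim hook of length $n-5$ or $n-6$, then bound $|\chi^\lambda(x)|$ by the maximum of $|\chi^\mu_{\rho_i}|$ over $\mu\vdash 5$ or $\mu\vdash 6$, reading off the values $1,3,1,1,1$ for the five residual cycle types. The only imprecision is the phrase ``exactly one rim hook'': Lemma~\ref{lem:uniqueness} only guarantees \emph{at most} one (some $\lambda$ have none, in which case the Murnaghan--Nakayama sum is empty and $\chi^\lambda(x)=0$), but this does not affect the bound.
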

The proof is similar to the proof of Lemma~\ref{lem:main_val}.

\begin{proof}
	Let $\lambda \vdash n$. By Lemma~\ref{lem:uniqueness} and the Murnaghan-Nakayama rule, we have
	\begin{align*}
		|\chi^\lambda_{(n-5,5)}| & \leq \max_{\mu \vdash 5} |\chi^\mu_{(5)}| = 1,\\
		|\chi^\lambda_{(n-5,2^3)}| & \leq \max_{\mu \vdash 6 } |\chi^\mu_{(2^3)}| = 3,\\
		|\chi^\lambda_{(n-5,4,1^2)}| & \leq \max_{\mu \vdash 6} |\chi^\mu_{(4,1^2)}| = 1,\\
		|\chi^\lambda_{(n-6,4,2)}| & \leq \max_{\mu \vdash 6} |\chi^\mu_{(4,2)}| = 1,\\
		|\chi^\lambda_{(n-6,5,1)}| & \leq \max_{\mu \vdash 6} |\chi^\mu_{(5,1)}| = 1.
	\end{align*}
	This completes the proof.
\end{proof}
\subsection{Eigenvalues of small degree characters}\label{subsect:even-small-degree}
In this subsection, we compute the eigenvalues coming from irreducible characters of degree less than $3\binom{n}{3}$.

The eigenvalues corresponding to the irreducible constituents of the permutation character are $\xi_{\chi^{[n]}} = \binom{n}{3}-1$ and $\xi_{\chi^{[n-1,1]}} = \xi_{\chi^{[n-2,2]}} = \xi_{\chi^{[n-3,3]}} = -1$. By \eqref{eq:general-eig-even}, Table~\ref{table:even} Appendix~\ref{appendix}, and Lemma~\ref{lem:irrecharless}, the eigenvalues of degree less than $3\binom{n}{3}$ are  

\begin{align}
\begin{split} 
\xi_{\chi^{[1^n]}} &= (-1)^n \left( \binom{n}{3}-1 -2s -2t \right),\\
\xi_{\chi^{[n-2,1^2]}} &= \frac{1}{\binom{n-1}{2}} \left( -t + \beta +\gamma \right),\\
\xi_{\chi^{[3,1^{n-3}]}} &= \frac{(-1)^{n}}{\binom{n-1}{2}} \left( -t +\beta + \gamma \right) ,\\
\xi_{\chi^{[n-3,1^3]}} &= \frac{1}{\binom{n-1}{3}} \left( t - (\beta + \gamma) \right),\\
\xi_{\chi^{[4,1^{n-4}]}} &= \frac{(-1)^n}{\binom{n-1}{3}} \left(t - (\beta +\gamma)\right) ,\\
\xi_{\chi^{[n-3,2,1]}} &= \frac{3t}{n(n-2)(n-4)} ,\\
\xi_{\chi^{[3,2,1^{n-5}]}} &= \frac{(-1)^{n-1}3t}{n(n-2)(n-4)},\\
\xi_{\chi^{[2,1^{n-2}]}}&= (-1)^{n-1} +\frac{2s}{n-1},\\
\xi_{\chi^{[2^2,1^{n-4}]}} &=(-1)^{n-1} + \frac{(-1)^n}{\binom{n}{2} - n} \left( -2s +2t \right),\\
\xi_{\chi^{[2^3,1^{n-6}]}} &= (-1)^{n-1} + \frac{(-1)^n2s}{\binom{n}{3} - \binom{n}{2}}.
\end{split} 
\label{eq:eig-even}
\end{align}

Let $\mathcal{P}$ be the polytope defined by the halfspaces
\begin{align}
	\left\{
		\begin{aligned}
			&2x + 2y < \binom{n}{3},\\
			&x - y >0,\\
			& \beta + \gamma - \binom{n-1}{2} < x < \beta + \gamma + \binom{n-1}{2},\\
			&y>0.
		\end{aligned}
	\right.\label{eq:polytope}
\end{align}
The first two equations determine two lines intersecting at a point $I\in$$\left\{(x,y) \in \mathbb{R}^2 : x,y\geq 0 \right\}$. The coordinates of this intersection point, $I$, is $\left( \frac{1}{4}\binom{n}{3},\frac{1}{4}\binom{n}{3} \right)$. It is easy to check that $\frac{1}{4}\binom{n}{3} > \beta + \gamma - \binom{n-1}{2}$. Hence, we conclude that the polytope $\mathcal{P} $ is non-empty and is contained in the quadrant $ \left\{(x,y) \in \mathbb{R}^2 : x,y> 0 \right\}$. 

We let the reader verify that all the eigenvalues corresponding to irreducible characters of degree less than $3\binom{n}{3}$ (i.e., the eigenvalues in \eqref{eq:eig-even}) are in $\left(-1,\binom{n}{3}-1\right)$, for any weights $\omega_i(s,t)$, $1\leq i \leq 5$, and $(t,s) \in \mathcal{P}$.
\subsection{Eigenvalues of large degree characters}\label{subsect:even-large-degree}
We prove that the irreducible characters of degree greater than $ 3\binom{n}{3}$ are strictly less than $1$ in absolute value. 

We prove that $\omega_i(s,t) \geq 0$, for any $i\in \{1,2,3,4,5\}$ and $(t,s) \in \mathcal{P}$. It is straightforward to see that $\omega_4(s,t)$ and $\omega_5(s,t)$ are non-negative whenever $s,t\geq 0$.  Using the relations in \eqref{eq:polytope}, the other weights are positive since 
\begin{align*}
	\omega_1(s,t) & \geq \frac{2}{3}\beta + \frac{1}{3}\gamma - \frac{1}{3}>0,\\
	\omega_2(s,t) & \geq \frac{1}{6} \left( \alpha - 3\gamma -2\beta -\binom{n-1}{2} \right) >0,\\
	\omega_3 (s,t) &\geq \frac{1}{4} \left( \alpha -2\beta -1  \right) + \frac{1}{2}s >0,
\end{align*}
when $n\geq 16$. \\
In conclusion, the weights in \eqref{eq:weights-even} are all positive for any $(t,s) \in \mathcal{P}$.

Now, we are equipped with all the tools for the remaining part of the proof. If $\chi \in\operatorname{Irr}_n$ is of degree greater than $3\binom{n}{3}$ and $(t,s) \in \mathcal{P}$, then by Lemma~\ref{lem:main_val-even}, we have
\begin{align*}
	|\xi_{\chi}(s,t)| &\leq \frac{1}{\chi(id)} \sum_{i=1}^5 |\omega_i(s,t)||\chi(C_i)|\\
	&\leq \frac{1}{\chi(id)}  \sum_{i=1}^5 3|w_i(s,t)|\\
	&\leq \frac{3}{3\binom{n}{3}}\sum_{i=1}^5 \omega_i(s,t) = \frac{\binom{n}{3}-1}{\binom{n}{3}} <1.
\end{align*}

With the results in Subsection~\ref{subsect:even-small-degree}, we conclude that all eigenvalues of the weighted adjacency matrix $A$ (defined in \eqref{eq:weighted_adjacency_matrix_even}) are in the interval $[-1,\binom{n}{3}-1]$. Moreover, the only irreducible characters of $\sym(n)$ giving the eigenvalue $-1$ are $\chi^{[n-1,1]},\ \chi^{[n-2,2]}$, and $\chi^{[n-3,3]}$. This completes the proof for the case $n\geq 20$ even.

\section{Further works}\label{sect:further_works}
In this paper, we proved the first part of Conjecture~\ref{conj:setwise}, for $t=3$. The case $t=2$ was recently proved in \cite{meagher20202}. We believe that the method used in \cite{meagher20202} and the one used in this paper will not work for the case $t\geq 4$. This is mainly due to the fact that more conjugacy classes of $t$-derangements that are not just long cycles will be used. As we saw in the case of the conjugacy class with cycle type $(n-6,2^3)$, the values of the irreducible characters on such conjugacy classes might be relatively small, in absolute value, but it forces us to examine many eigenvalues that are not bounded by $1$, in absolute value.

Ellis, Friedgut, and Pilpel \cite{ellis2011intersecting} proved that for any fixed $t\in \mathbb{N}$ and $n\in \mathbb{N}$ large enough depending on $t$, if any two permutations of $\mathcal{F} \subset \sym(n)$ agree on at least $t$ elements, then $|\mathcal{F}|\leq (n-t)!$. Ellis \cite{ellis2012setwise} proved the analogue of this result for $t$-setwise intersecting permutations. In particular, the result in \cite{ellis2012setwise} implies that Theorem~\ref{thm:main} holds for $n$ large enough; however, we could not find the exact lower bound on $n$.

Although we did not characterize the maximum cocliques of $\Gamma_{n,3}$, our method gives a complete proof for the case $t = 3$ of the first part of Conjecture~\ref{conj:setwise}. When $n\geq 11$, we were also able to prove that there are infinitely many adjacency matrices that would work to prove Theorem~\ref{thm:main}. Another strong aspect of our result is that only ten conjugacy classes (five for each case) were used, in contrast to the result in \cite{ellis2012setwise}, where all the conjugacy classes of $t$-derangements are used. 

We end this paper by asking a few questions for future works.
\begin{quest} 
	Is there a profound reason behind why the conjugacy classes that we chose to construct the weighted adjacency matrices work, or is it merely because they are mostly long cycles?
\end{quest}

Godsil and Meagher \cite{godsil2009new} used a rank argument on a matrix whose columns are characteristic vectors of maximum cocliques to characterize the maximum $1$-setwise intersecting families of $\sym(n)$. The first step in their proof is to show that the characteristic vectors of every maximum coclique lie in the eigenspace $U_{[n]} \oplus U_{[n-1,1]}$, which is the eigenspace induced by the permutation character for the natural action of $\sym(n)$. Similar results are also obtained in this paper for $3$-setwise intersecting permutations and in \cite{meagher20202} for $2$-setwise intersecting permutations. We ask the following.
\begin{quest}
	Is it possible to characterize the maximum $2$-setwise and $3$-setwise intersecting families of permutations of $\sym(n)$ using the rank argument in \cite{godsil2009new}?
\end{quest}

Another interesting direction is the EKR property for transitive subgroups of $\sym(n)$; i.e., transitive permutation groups of degree $n$. It follows easily from the ``No-Homomorphism Lemma'' \cite{albertson1985homomorphisms} that if $H \mbox{ and } G$ are transitive permutation groups of degree $n$ such that $H \leq G$ and $H$ has the EKR property, then $G$ has the EKR property.
\begin{prob}
	Let $n\geq 2$. Find the maximum $k \in \mathbb{N}$ so that the chain $\sym(n)= G_1 > G_2> \cdots > G_k $ of transitive subgroups of $\sym(n)$ has the following properties:
	\begin{enumerate}[(1)]
		\item no proper transitive subgroups of $G_k$ has the EKR property, and
		\item $G_i$ has the EKR property for any $i\in \{1,2,\ldots,k\}$.
	\end{enumerate}
\end{prob}

An immediate consequence of Theorem~\ref{thm:main} is that $\chi(\Gamma_{n,3}) = \binom{n}{3}$. Indeed, one can always assign distinct colors to the cosets of a setwise stabilizer of a $3$-subset of $[n]$. There are $\binom{n}{3}$ such cosets and each of them is a coclique of $\Gamma_{n,3}$. Thus, $\chi(\Gamma_{n,3}) \leq \binom{n}{3}$. On the other hand, Theorem~\ref{thm:main} implies that $\chi(\Gamma_{n,3}) \geq \frac{n!}{\alpha(\Gamma_{n,3})} = \binom{n}{3}$, for $n\geq 11$. We conjecture the following.
\begin{conj}
	For any positive integers $n$ and $t$ such that $n\geq t$, we have $\chi(\Gamma_{n,t}) = \binom{n}{t}$.
\end{conj}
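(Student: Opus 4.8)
The plan is to prove the two inequalities $\chi(\Gamma_{n,t}) \le \binom{n}{t}$ and $\chi(\Gamma_{n,t}) \ge \binom{n}{t}$ separately. The first is elementary and purely combinatorial, while the second will follow from the coclique bound $\alpha(\Gamma_{n,t}) = t!(n-t)!$ asserted in Conjecture~\ref{conj:setwise}.

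For the upper bound I would fix a $t$-subset $S_0 \subset [n]$ and color each $\sigma \in \sym(n)$ by the image $S_0^\sigma$. Since $\sym(n)$ is transitive on $t$-subsets, exactly $\binom{n}{t}$ colors occur, and each color class $\{\sigma : S_0^\sigma = T\}$ is a right coset of the setwise stabilizer $H = \{\sigma : S_0^\sigma = S_0\}$ of order $t!(n-t)!$. Any two permutations in the same class agree setwise on $S_0$, hence are $t$-setwise intersecting and therefore non-adjacent in $\Gamma_{n,t}$; so the coloring is proper and $\chi(\Gamma_{n,t}) \le \binom{n}{t}$.

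For the lower bound I would use the elementary inequality $\chi(X) \ge |V(X)|/\alpha(X)$, valid for every graph because each of the $\chi(X)$ color classes is a coclique of size at most $\alpha(X)$. This gives $\chi(\Gamma_{n,t}) \ge n!/\alpha(\Gamma_{n,t})$. The single coset $H$ above is itself a $t$-setwise intersecting family of size $t!(n-t)!$, so $\alpha(\Gamma_{n,t}) \ge t!(n-t)!$ is immediate; the reverse inequality $\alpha(\Gamma_{n,t}) \le t!(n-t)!$ is precisely the first assertion of Conjecture~\ref{conj:setwise}. Granting it, $\alpha(\Gamma_{n,t}) = t!(n-t)!$ and hence $\chi(\Gamma_{n,t}) \ge n!/\big(t!(n-t)!\big) = \binom{n}{t}$, which together with the upper bound yields equality. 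In particular the statement is already a theorem for $t=2$ and all $n \ge 2$ by \cite{meagher20202}, and for $t=3$ and $n \ge 11$ by Theorem~\ref{thm:main}.

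The hard part will be establishing the coclique bound for general $t$, which is exactly the open Conjecture~\ref{conj:setwise} for $t \ge 4$. One might hope to bypass it by attacking the lower bound through the clique number, since a clique of size $\binom{n}{t}$ in $\Gamma_{n,t}$ would give $\chi(\Gamma_{n,t}) \ge \omega(\Gamma_{n,t}) \ge \binom{n}{t}$ directly. However, I expect this route to be no easier: by the clique-coclique bound (Lemma~\ref{lem:clique-coclique}), a clique of size $\binom{n}{t}$ forces $\alpha(\Gamma_{n,t}) \le n!/\binom{n}{t} = t!(n-t)!$, so it would in fact re-prove the coclique bound (and more, namely tightness of clique-coclique). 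Thus, up to the trivial coloring upper bound, the chromatic-number statement is essentially equivalent to the coclique conjecture, and the genuine content lies in the eigenvalue and representation-theoretic control of $\alpha(\Gamma_{n,t})$ developed in this paper, whose extension to $t \ge 4$ meets the obstructions discussed in Section~\ref{sect:further_works}.
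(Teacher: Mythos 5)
Your argument is correct and is essentially identical to the paper's own reasoning: the authors also obtain $\chi(\Gamma_{n,3})\le\binom{n}{3}$ by coloring with the $\binom{n}{3}$ cosets of a setwise stabilizer and $\chi(\Gamma_{n,3})\ge n!/\alpha(\Gamma_{n,3})=\binom{n}{3}$ from Theorem~\ref{thm:main}, which is why the general statement is left as a conjecture contingent on the coclique bound of Conjecture~\ref{conj:setwise}. You also correctly observe that, modulo the trivial coloring upper bound, the conjecture is equivalent to the open coclique bound for $t\ge 4$, which is exactly the state of affairs in the paper.
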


{\noindent\textsc{Acknowledgment}}: We would like to thank Karen Meagher and Shaun Fallat for reading an earlier draft of this paper, and helping us improve the presentation of the paper. We are also grateful to the two anonymous referees for carefully reading the manuscript and for their valuable comments.


\appendix
\newgeometry{margin=1in,top=1in}
\section{Character values}\label{appendix}

\begin{longtable}{| c | c | c | c | c | c | c |}
	\hline
	\textbf{Representation} & \textbf{Degree} & $C_{(n-2,1^2)}$ & $C_{(n-2,2)}$ & $C_{(n)}$ & $C_{(n-5,4,1)}$ & $C_{(n-1,1)}$ \endhead
	\hline
	$\chi^{[1^n]}$ & $1$ &$(-1)^{n-1}$ &$(-1)^n$ & $(-1)^{n-1}$& $(-1)^{n-1}$ & $(-1)^n$ \\
	\hline
	$\chi^{[2,1^{n-2}]}$ & $n-1$ &$(-1)^{n-1}$ &$(-1)^{n-1}$ &$(-1)^n$ & $0$ & $0$ \\
	\hline
	$\chi^{[2^2,1^{n-4}]}$ & $\binom{n}{2} - n$ &$(-1)^n$ &$(-1)^n$ &$0$ & $(-1)^{n}$ &$(-1)^{n-1}$ \\
	\hline
	$\chi^{[2^3,1^{n-6}]}$ & $\binom{n}{3} - \binom{n}{2}$& $(-1)^n$ &$(-1)^{n-1}$ &$0$ &$0$ & $0$ \\
	\hline
	$\chi^{[3,1^{n-3}]}$ & $ \binom{n-1}{2}$& $0$&$0$ &$(-1)^{n-1}$ &$0$& $0$ \\
	\hline
	$\chi^{[3,2,1^{n-5}]}$ & $\frac{n(n-2)(n-4)}{3}$ &$0$ &$0$ &$0$ & $(-1)^{n-1}$& $0$ \\
	\hline
	$\chi^{[4,1^{n-4}]}$ & $\binom{n-1}{3}$  &$0$ &$0$ &$(-1)^n$ & $0$& $0$\\
	\hline
	$\chi^{[n-3,1^3]}$ &  $\binom{n-1}{3}$  &$0$ & $0$&$-1$ &$0$& $0$ \\
	\hline
	$\chi^{[n-3,2,1]}$ & $\frac{n(n-2)(n-4)}{3}$ &$0$ &$0$ &$0$ &$1$ & $0$\\
	\hline
	$\chi^{[n-3,3]}$ & $\binom{n}{3} - \binom{n}{2}$&$-1$ &$-1$ &$0$ &$0$& $0$ \\
	\hline
	$\chi^{[n-2,1^2]}$ & $ \binom{n-1}{2}$&$0$ &$0$ &$1$ & $0$& $0$\\
	\hline
	$\chi^{[n-2,2]}$ & $\binom{n}{2} - n$ &$-1$ &$1$ &$0$ &$-1$&$-1$ \\
	\hline
	$\chi^{[n-1,1]}$ & $n-1$ &$1$ &$-1$ &$-1$ &$0$& $0$ \\
	\hline
	$\chi^{[n]}$ & $1$ &$1$ &$1$ &$1$ &$1$ &$1$\\
	\hline
	\caption{Character values of irreducible characters of degree less than $5\binom{n}{3}$.}
	\label{table:odd}
\end{longtable}

\begin{longtable}{| c | c | c | c | c | c | c |c|}
	\hline
	\textbf{Representation} & \textbf{Degree} & $C_{(n-6,2^3)}$ & $C_{(n-6,5,1)}$ & $C_{(n-6,4,2)}$ & $C_{(n-6,4,1^2)}$ & $C_{(n-5,5)}$  \endhead
	\hline
	$\chi^{[1^n]}$ & $1$ & $(-1)^n$ & $(-1)^{n-1}$ & $(-1)^{n-1}$ & $(-1)^n$ & $(-1)^n$\\
	\hline
	$\chi^{[2,1^{n-2}]}$ & $n-1$ &$(-1)^{n-1}$  & $0$ &$(-1)^n$  &$(-1)^n $ & $(-1)^{n-1}$\\
	\hline
	$\chi^{[2^2,1^{n-4}]}$ & $\binom{n}{2}-n$& $3(-1)^n$ & $(-1)^n$ & $(-1)^{n-1}$ & $(-1)^{n-1}$ & $0$\\
	\hline
	$\chi^{[2^3,1^{n-6}]}$ &$\binom{n}{3} - \binom{n}{2}$ & $3(-1)^{n-1}$ & $0$ & $(-1)^n$ & $(-1)^{n-1}$ & $0$\\
	\hline
	$\chi^{[3,1^{n-3}]}$ & $\binom{n-1}{2}$& $2(-1)^{n-1}$ & $0$ & $0$ & $0$ & $(-1)^n$ \\
	\hline
	$\chi^{[3,2,1^{n-5}]}$ & $\frac{n(n-2)(n-4)}{3}$ & $0$  & $(-1)^{n-1}$ & $0$ & $0$ & $0$ \\
	\hline
	$\chi^{[4,1^{n-4}]}$ & $\binom{n-1}{3}$ & $2(-1)^n$ & $0$ & $0$ & $0$ &$(-1)^{n-1}$\\
	\hline
	$\chi^{[n-3,1^3]}$ & $\binom{n-1}{3}$ & $2$ & $0$ & $0$ & $0$ & $-1$\\
	\hline
	$\chi^{[n-3,2,1]}$ & $\frac{n(n-2)(n-4)}{3}$ & $0$ & $1$ & $0$ & $0$ &$0$ \\
	\hline
	$\chi^{[n-3,3]}$ & $\binom{n}{3}-\binom{n}{2}$& $-3$ & $0$ & $-1$ & $-1$ &$0$\\
	\hline
	$\chi^{[n-2,1^2]}$ & $\binom{n-1}{2}$ & $-2$ & $0$ & $0$ & $0$ &$1$\\
	\hline
	$\chi^{[n-2,2]}$ &$\binom{n}{2} - n$ & $3$ & $-1$ & $1$ & $-1$ &$0$\\
	\hline
	$\chi^{[n-1,1]}$ & $n-1$ & $-1$ & $0$ & $-1$ & $1$& $-1$\\
	\hline
	$\chi^{[n]}$& $1$ &$1$ & $1$ &$1$ &$1$ &$1$\\
	\hline
	\caption{Character values of the irreducible characters of degree less than $3\binom{n}{3}$.}
	\label{table:even}
\end{longtable}
\end{document}